\newtheorem{theorem}{Theorem}
\newtheorem{lemma}[theorem]{Lemma}
\newenvironment{remark}{\rem\rm}{\endrem}
\newcommand{\R}{\mathbb{R}}%
\newcommand{\N}{\mathbb{N}}%
\newcommand{\e}{\varepsilon}%
\newcommand{\ol}{\overline}%
\newcommand{\ox}{\overline{x}}
\newcommand{\n}{{\nabla}}
\newcommand{\p}{{\partial}}
\newcommand{\To}{\longrightarrow}
\def\a{\alpha}
\def\e{\epsilon}
\def\l{\lambda}
\def\<{\langle}
\def\>{\rangle}
\DeclareMathOperator*\prox{prox}%
\DeclareMathOperator*\argmin{argmin}
\DeclareMathOperator*\pr{pr}
\title{A proximal-gradient inertial algorithm with Tikhonov regularization: strong convergence to the minimal norm solution}
\author{ Szil\'{a}rd Csaba L\'{a}szl\'{o} \thanks{Technical University of Cluj-Napoca, Department of Mathematics, Memorandumului 28, Cluj-Napoca,
 Romania, e-mail: szilard.laszlo@math.utcluj.ro. This work was supported by a grant of the Ministry of Research, Innovation and Digitization, CNCS-UEFISCDI, project number PN-III-P1-1.1-TE-2021-0138, within PNCDI III.}}
\begin{document}
\maketitle

\noindent \textbf{Abstract.} We investigate the strong convergence properties of a proximal-gradient inertial algorithm with two Tikhonov regularization terms in connection to the minimization problem of the sum of a convex lower semi-continuous function $f$ and a smooth convex function $g$.  For the appropriate setting of the parameters we provide strong convergence of the generated sequence $(x_k)$ to the minimum norm minimizer of our objective function $f+g$. Further, we obtain fast convergence to zero of the objective function values in a generated sequence but also for the discrete velocity and the sub-gradient of the objective function. We also show that for another settings of the parameters the optimal rate of order $\mathcal{O}(k^{-2})$ for the potential energy $(f+g)(x_k)-\min(f+g)$ can be obtained.

\vspace{1ex}

\noindent \textbf{Key Words.} inertial algorithm, proximal-gradient algorithm, convex optimization, Tikhonov regularization, strong convergence, optimal rate

\noindent \textbf{AMS subject classification.}  34G25, 47J25, 47H05, 90C26, 90C30, 65K10

\section{Introduction}\label{sec-intr}

Let $\mathcal{H}$ be a real Hilbert space endowed with the scalar product $\< \cdot,\cdot\>$ and norm $\|\cdot\|=\sqrt{\< \cdot,\cdot\>}$. Consider the optimization problem
\begin{equation}\label{opt-pb}\tag{P} \inf_{x\in\mathcal{H}}f(x)+g(x)
\end{equation}
where $f:\mathcal{H}\To \overline{\R}=\R\cup\{+\infty\}$ is a convex lower semi-continuous and $g:\mathcal{H}\To \R$ is a convex, continuously Fr\'{e}chet differentiable function, with $L$-Lipschitz continuous gradient. We assume that the set of minimizers of $f+g$, that is $\argmin(f+g)$, is nonempty.
In order to introduce a proximal-gradient algorithm associated to the optimization problem \eqref{opt-pb} consider a sequence $(t_k)_{k\ge0}$ with $t_0=1$ and that for $k\ge 1$  satisfies the condition
\begin{equation}\label{condT}\tag{T}
t_{k-1}<t_{k}<\frac{1+\sqrt{1+4t_{k-1}^2}}{2}.
\end{equation}

Let $(t_k)_{k\ge0}$ a sequence that satisfies \eqref{condT} and consider the following  inertial proximal-gradient algorithm.
Let $x_0,x_{1}\in\mathcal{H}$
and for all $k\ge 1$ set

\begin{equation}\label{tdiscgen}\tag{TIREPROG}
\left\{\begin{array}{lll}
y_k= x_k+\frac{(t_{k}-1)(t_{k-1}-1)}{t_{k-1}^2}(x_k-x_{k-1})-\frac{-t_{k}^2+t_{k}+t_{k-1}^2}{t_{k-1}^2t_{k}} x_k
\\
x_{k+1}=\prox_{sf}(y_k-s\n g(y_k)-s\e_k y_k).
\end{array}\right.
\end{equation}
Here $\prox\nolimits_{s  f} : {\mathcal{H}} \to \mathcal{H}, \quad \prox\nolimits_{s f}(x)=\argmin_{y\in \mathcal{H}}\left \{f(y)+\frac{1}{2s}\|y-x\|^2\right\},$ denotes the proximal point operator of the convex function $s  f$.
We assume that  $s\in\left(0,\frac{1}{L}\right)$ and that $(\e_k)_{k\ge 1}$ is a non-increasing positive sequence that satisfies $\lim_{k\to+\infty}\e_k=0,$ hence the term $s\e_ky_k$ in \eqref{tdiscgen} is a Tikhonov regularization term. A comprehensive analysis of condition \eqref{condT} will carried out in section \ref{onT}. We mention however, that according to the definition of $t_k$, the inertial parameter $\frac{(t_{k}-1)(t_{k-1}-1)}{t_{k-1}^2}$ is non-negative and the parameter $\frac{-t_{k}^2+t_{k}+t_{k-1}^2}{t_{k-1}^2t_{k}}$, (which will play the role of another Tikhonov regularization parameter in algorithm \eqref{tdiscgen}), is positive for all $k\ge 1.$

\subsection{Motivation and related works}\label{motivation}

The condition \eqref{condT} imposed on  the sequence $(t_k)$ is inspired by  Nesterov's convex gradient method \cite{Nest1} associated to the optimization problem $\inf_{x\in\mathcal{H}}g(x)$, where the objective function $g$ is convex, smooth and has an $L-$Lipschitz continuous gradient, that is: $x_0=x_1\in \mathcal{H}$ and for all $k\ge 1$
\begin{equation}\label{orignest}
\left\{\begin{array}{lll}
y_k= x_k+\frac{t_k-1}{t_{k+1}}(x_k-x_{k-1})
\\
x_{k+1}=y_k-s\n g(y_k).
\end{array}\right.
\end{equation}

According to \cite{Nest1}, the sequences generated by \eqref{orignest} satisfy
$g(x_k)-\min g=\mathcal{O}(k^{-2})\mbox{ as }k\to+\infty$, provided the stepsize $s\in\left(0,\frac{1}{L}\right]$ and the sequence $(t_k)$ is defined via the recursion $t_1=1,\,t_{k+1}=\frac{1+\sqrt{1+4t_k^2}}{2},$ for all $k\ge 1.$ We underline that the rate $g(x_k)-\min g=\mathcal{O}(k^{-2})\mbox{ as }k\to+\infty$ obtained by Nesterov is optimal in the class of convex smooth functions with Lipschitz continuous gradient. However the convergence of the sequences generated by \eqref{orignest}, (at least in the weak topology of $\mathcal{H}$), is still an open problem.

The results of Nesterov were extended by Beck and Teboulle  to the optimization problem \eqref{opt-pb}, (see \cite{BT}), where  the following proximal-gradient inertial algorithm, named (FISTA), was considered:  $x_0=x_1\in \mathcal{H}$ and for all $k\ge 1$
\begin{equation}\label{fista}
\left\{\begin{array}{lll}
y_k= x_k+\frac{t_k-1}{t_{k+1}}(x_k-x_{k-1})
\\
x_{k+1}=\prox_{sf}(y_k-s\n g(y_k)).
\end{array}\right.
\end{equation}
Also in \eqref{fista} one has $s\in\left(0,\frac{1}{L}\right]$ and the sequence $(t_k)$ satisfies the recursion $t_1=1,\,t_{k+1}=\frac{1+\sqrt{1+4t_k^2}}{2},$ for all $k\ge 1.$ According to \cite{BT}, for the sequence generated by \eqref{fista}  the rate
$(f+g)(x_k)-\min(f+ g)=\mathcal{O}(k^{-2})\mbox{ as }k\to+\infty$ holds.
Also in this case, the convergence of the sequences generated by  \eqref{fista} is still a widely open problem.

Fortunately, in order to obtain the optimal rate of order $\mathcal{O}(k^{-2})$ in \eqref{orignest} or \eqref{fista} it is enough to assume that the sequence $(t_k)$ satisfies the condition \eqref{condT}. More precisely, if one consider $t_k=r(k-1),\,r\le\frac12,$ then $(t_k)$ satisfies condition \eqref{condT}, i.e., one has $t_k<t_{k+1}<\frac{1+\sqrt{1+4t_k^2}}{2},$ therefore in the literature $t_k$ is usually taken in the form $t_k=\frac{k-1}{\a-1},\,\a\ge 3.$ In that case algorithm \eqref{fista} becomes: $x_0=x_1\in\mathcal{H}$ and for $k\ge1$
\begin{equation}\label{modfista}
x_{k+1}=\prox\nolimits_{sf}\left(x_k+\left(1-\frac{\a}{k}\right)(x_k-x_{k-1})-s\n g\left(x_k+\left(1-\frac{\a}{k}\right)(x_k-x_{k-1})\right)\right).
\end{equation}
According to \cite{AP,CD} the sequences generated by \eqref{modfista} converge in the weak topology of $\mathcal{H}$ to a minimizer of $f+g$ and $(f+g)(x_k)-\min (f+g)=o(k^{-2})\mbox{ as }k\to+\infty$, provided $\a>3.$ Note that for $\a=3$ one has only the rate $(f+g)(x_k)-\min (f+g)=\mathcal{O}(k^{-2})\mbox{ as }k\to+\infty$ and, also in this case it is not known whether the generated sequences converge.

In \cite{ACCR} the authors considered  a FISTA type algorithm with a Tikhonov regularization term, that is,
\begin{equation}\label{fistaTikh}
\left\{\begin{array}{lll}
y_k= x_k+\frac{t_k-1}{t_{k+1}}(x_k-x_{k-1})
\\
x_{k+1}=\prox_{sf}(y_k-s\n g(y_k)-s\e_ky_k).
\end{array}\right.
\end{equation}
Here $(\e_k)$ is a non-increasing positive sequence that goes to zero.
In case $(t_k)$ satisfies condition \eqref{condT}, (even with equality on the right hand side),  then  according to \cite{ACCR} the  ergodic strong convergence result 
$$\lim_{k\to+\infty}\left\|\frac{\sum_{i=1}^k  \frac{\e_i}{t_{i+1}}x_i}{\sum_{i=1}^k\frac{\e_i}{t_{i+1}}}-x^*\right\|=0$$
holds, where $x^*$ is the minimum norm minimizer of $f+g$.
Fast convergence of the discrete velocity $\|x_k-x_{k-1}\|$ is also obtained, but no convergence result for the potential energy $(f+g)(x_k)-\min (f+g)$ is provided. According to \cite{ACCR}, obtaining both the fast convergence of the potential energy $(f+g)(x_k)-\min (f+g)$
 and the convergence of the generated sequences toward the  minimum norm solution is a difficult challenge, because these two requirements are somewhat incompatible. 
 
 In this paper we answer positively to this challenge, by obtaining even something more: both "full" strong convergence of the generated sequences to the minimum norm solution, that is, $\lim_{k\to+\infty}\|x_k-x^*\|=0,$ and fast rates for the decay $(f+g)(x_k)-\min (f+g)$.
Therefore, in this paper we associated to the optimization problem \eqref{opt-pb} the algorithm \eqref{tdiscgen}, which is a   proximal-gradient algorithm with two Tikhonov regularization terms, and our goal is to provide conditions such that the sequences generated by this algorithm converge, in the strong topology of $\mathcal{H}$, to the minimum norm minimizer of the objective function $f+g.$  At the same time we aim to preserve (as much as possible) the rate $\mathcal{O}(k^{-2})$ for the  potential energy $(f+g)(x_k)-\min (f+g).$ That is the reason why, inspired from \cite{L-CNSN} and \cite{KL-amop}, we considered in our inertial proximal-gradient algorithm two Tikhonov regularization terms.  Though considering two Tikhonov regularization terms  increase the complexity of our algorithm, the use of both Tikhonov regularization terms is essential in order to obtain strong convergence of the generated sequences to the minimum norm minimizer of the objective function, as some numerical experiments show. Indeed, as we mentioned before, the term $s\e_ky_k$ is a Tikhonov regularization term since $\e_k$ is a nonincreasing positive sequence that goes to $0$ as $k\to+\infty.$ At the same  time, under some extra assumptions on  $(t_k)$ that will assure  strong convergence of the generated sequences,  the term $\frac{-t_{k}^2+t_{k}+t_{k-1}^2}{t_{k-1}^2t_{k}} x_k$ can be considered as a  Tikhonov regularization term since $\frac{-t_{k}^2+t_{k}+t_{k-1}^2}{t_{k-1}^2t_{k}}$ is a positive sequence that goes to $0$ as $k\to+\infty$, see Remark \ref{afterstrconv}.

 We emphasize that the introduction of the Tikhonov regularization terms in the classical proximal-gradient algorithm will assure the strong convergence of the generated sequences to the element of minimal norm from $\argmin( f+g),$ (for similar results see \cite{AL-siopt,abc2,ACCR,ACR,att-com1996,AC,AL-nemkoz,BCL,BGMS,CPS,JM-Tikh,L-jde,L-CNSN,Tikh,TA}). Our analysis reveals that the inertial parameter and the Tikhonov regularization parameters, (actually $t_k$ and $\e_k$), are strongly correlated. This fact is in concordance with some recent results from the literature concerning the strong convergence of the trajectories of some continuous second order dynamical systems to a minimal norm minimizer of a convex function or to the minimal norm zero of a maximally monotone operator \cite{AL-siopt,ABCR, ABCR_0,ACR,ACR2,AL-nemkoz,BCL,BCLstr,BGMS,K,L-jde,L-coap}.  Due to this correlation, the inertial parameter in \eqref{tdiscgen} is  $\frac{(t_{k}-1)(t_{k-1}-1)}{t_{k-1}^2}$ which is quite different from the inertial parameter used in \eqref{fista}, that is $\frac{t_k-1}{t_{k+1}},$ however in order to obtain the desired rates, both inertial parameters must go to $1$ as $k\to+\infty.$

We underline that the forms of the inertial parameter $\frac{(t_{k}-1)(t_{k-1}-1)}{t_{k-1}^2}$ and the Tikhonov regularization parameter $\frac{-t_{k}^2+t_{k}+t_{k-1}^2}{t_{k-1}^2t_{k}}$ are crucial in order to obtain our strong convergence result.

 In \cite{AL-nemkoz} the following inertial-proximal algorithm was considered in connection to the smooth optimization problem $\inf_{x\in\mathcal{H}}f(x)$: $x_0,x_{1}\in\mathcal{H}$ and for all $k\ge 1$ set
\begin{equation}\label{algoAL}
x_{k+1}={ \rm prox}_{f}\left( x_k+\left(1-\frac{\a}{k}\right)(x_k -  x_{k-1}) - \frac{c}{k^2}x_k\right).
\end{equation}
where  $\a>3$ and $c>0$. To our best knowledge this is the first inertial algorithm  in the literature for which both the strong convergence result $\liminf_{k\to+\infty}\|x_k-x^*\|=0$ for the generated sequences and fast convergence of the potential energy $f(x_k)-\min f$ and discrete velocity $\|x_k-x_{k-1}\|$ were obtained. Note that \eqref{algoAL} can be obtained via implicit discretization from the following dynamical system studied in \cite{AL-nemkoz,ACR}.
\begin{align}\label{DynSysACR}
&\ddot{x}(t) + \frac{\a}{t} \dot{x}(t) +\nabla f\left(x(t)\right) +\e(t)x(t)=0,\,
x(t_0) = u_0, \,
\dot{x}(t_0) = v_0,
\end{align}
where $t\ge t_0 > 0,\a\ge 3$, $(u_0,v_0) \in \mathcal{H} \times \mathcal{H}$ and the Tikhonov regularization parameter $\e(t)$ is a nonincreasing positive function satisfying $\lim_{t\to+\infty}\e(t)=0$. However the rates are not entirely preserved, since in \eqref{algoAL} one obtains only the rates $f(x_k)-\min f=o(k^{-2s})$ and  $\|x_k-x_{k-1}\|=o(k^{-s})$ as $k\to+\infty$, where $s\in\left[\frac12,1\right),$ meanwhile according to \cite{AL-nemkoz}, related to the dynamical system \eqref{DynSysACR} the rates $f(x(t))-\min f=\mathcal{O}(t^{-2})$ and  $\|\dot{x}(t)\|=o(t^{-1})$ are obtained, where $x(t)$ is a trajectory generated by the dynamical system \eqref{DynSysACR}.

In order to show the strong convergence result $\lim_{k\to+\infty}\|x_k-x^*\|=0$ but also the correlation among the stepsize, inertial parameter and Tikhonov regularization parameter, in \cite{L-CNSN} the author assumed that the objective function $f$ in $\inf_{x\in\mathcal{H}}f(x)$ is proper, convex and lower semicontinuous only and associated to this optimization problem the following inertial-proximal algorithm: $x_0,x_{1}\in\mathcal{H}$ and for all $k\ge 1$ set
\begin{equation}\label{algostrongL}
x_{k+1}={ \rm prox}_{\l_k f}\left( x_k+\left(1-\frac{\a}{k^q}\right)(x_k -  x_{k-1}) - \frac{c}{k^p}x_k\right),
\end{equation}
where  $\a,\,q,\,c,\,p>0$ and $(\l_k)$ is a sequence of positive real numbers.  According to \cite{L-CNSN}, in case the stepsize $\l_k\equiv 1$ and $0<q<1,\,1<p<q+1$ then $\lim_{k\to+\infty}\|x_k-x^*\|=0$, where $x^*$ is the minimal norm element from $\argmin f$. Further, $\|x_k-x_{k-1}\|\in \mathcal{O}(k^{-\frac{q+1}{2}})\mbox{ as }k\to+\infty$ and $f(x_{k})-\min f= \mathcal{O}(k^{-p})\mbox{ as }k\to+\infty.$
If   $q+1<p\le 2$ and for $p=2$ one has $c>q(1-q)$,
then  $(x_k)$ converges weakly to a minimizer of $f.$ Further,
$f(x_k)-\min f=\mathcal{O}(k^{-q-1}) \mbox{ and }\|x_{k}-x_{k-1}\|=\mathcal{O}(k^{-\frac{q+1}{2}})\mbox{ as } k\to+\infty.$

Concerning the case of inertial gradient type algorithms associated to a an optimization problem with smooth objective function with L-Lipschitz continuous gradient, in \cite{KL-amop} the authors considered a Nesterov type algorithm with two Tikhonov regularization terms, that is,
for the starting points $x_0,x_{1}\in\mathcal{H}$ and for all $k\ge 1$ set
\begin{equation}\label{tdiscgenKL}
\left\{\begin{array}{lll}
y_k= x_k+b_{k-1}(x_k-x_{k-1})-c_k x_k
\\
x_{k+1}=y_k-s\n f(y_k)-s\e_k y_k.
\end{array}\right.
\end{equation}
We emphasize that in \eqref{tdiscgenKL} the stepsize satisfies $0<s<\frac{1}{L}$ and $\e_k$ is a nonincreasing positive sequence that goes to $0$ just as in \eqref{tdiscgen}, however, the inertial parameter $b_{k-1}$ and the Tikhonov regularization parameter $c_k$  depend by $\e_k$ and have  complex forms. In \cite{KL-amop}, strong convergence of the generated sequences $(x_k)$ and $(y_k)$ to the minimum norm minimizer of $f$ has been obtained, further fast convergence rates for the potential energies $f(x_k)-\min f$, $f(y_k)-\min f$, of order $\mathcal{O}(\e_k)$,  fast rates for the discrete velocity and value of the gradient in a generated sequence of order $o(\sqrt{\e_k})$ were provided.

The scope of this paper is to obtain similar results for the sequences generated by \eqref{tdiscgen}, that is, beside obtaining strong convergence to the minimum norm minimizer of our objective function $f+g$, we ought to provide fast convergence  rates for the potential energy $(f+g)(x_k)-\min (f+g)$ and discrete velocity $\|x_k-x_{k-1}\|.$

\subsection{On condition \eqref{condT} and a model result}\label{onT}

Let us discuss about condition \eqref{condT}.
Since $t_0=1$ and $t_{k-1}<t_k$ for all $k\ge 1$ we conclude that  the sequence $(t_k)_{k\ge 0}$ is increasing and $t_k>1$ for all $k\ge 1.$ Hence, the sequence $(t_k)_{k\ge 0}$ has a limit greater than $1.$ Note that if we would allow equality in the right hand side of \eqref{condT}, that is, $t_k$ is defined by the recursion $t_k=\frac{1+\sqrt{1+4t_{k-1}^2}}{2}$ then the limit of $t_k$ is $+\infty$ and that is the case of Nesterov's algorithm or FISTA. However, in order to define \eqref{tdiscgen} we need the parameter $\frac{-t_k^2+t_k+t_{k-1}^2}{t_{k-1}^2t_k} \neq 0$, hence we cannot allow the equality in \eqref{condT}.

 Nevertheless, if we assume that beside \eqref{condT} one has $\lim_{k\to+\infty}t_k=+\infty$ then it is obvious that $\lim_{k\to+\infty}\frac{t_k}{t_{k-1}}=1,$ since  \eqref{condT} provides
$$1<\frac{t_k}{t_{k-1}}<\frac{1}{2t_{k-1}}+\sqrt{1+\frac{1}{4t_{k-1}^2}}.$$

Consequently, the inertial parameter in algorithm \eqref{tdiscgen}, i.e. $\frac{(t_{k-1}-1)(t_k-1)}{t_{k-1}^2}$, goes to $1$ as $k\to+\infty.$ Further, the parameter $\frac{-t_k^2+t_k+t_{k-1}^2}{t_{k-1}^2t_k}$ goes to $0$ as $k\to+\infty,$ hence indeed in this case $\frac{-t_k^2+t_k+t_{k-1}^2}{t_{k-1}^2t_k}x_k$ is a Tikhonov regularization term.

From a practical point of view, the most important  sequences that satisfy \eqref{condT} are of the form $t_k=(ak+1)^q,\,k\ge 0$ where we assume that $\frac{1}{2}\le q\le 1$ and $0<a\le\frac{1}{2q}.$ Indeed, in this case $t_0=1$ and $t_k>1$ for all $k\ge 1.$ Further, since the function $\phi(x)=(ax+1)^q$ is increasing on the interval $[0,+\infty)$, it is obvious that $t_k>t_{k-1}$ for all $k\ge 1.$

Let us show that $t_k<\frac{1+\sqrt{1+4t_{k-1}^2}}{2}$ for all $k\ge 1,$ that is
$$-(ak+1)^{2q}+(ak+1)^q+(ak+1-a)^{2q}>0\mbox{ for all }k\ge 1.$$

If $q=\frac12$ then the claim follows directly.
Otherwise, consider the function $\phi(x)=(ax+1)^{2q}$, which, since $q< \frac12$, is strictly convex on the interval  $[0,+\infty)$. For $x\ge 1$, by using the gradient inequality, one has
$\phi(x-1)-\phi(x)> -\phi'(x)$ or equivalently, $(ax+1-a)^{2q}-(ax-a)^{2q}>-2qa(ax+1)^{2q-1}.$
Further, since $a\le \frac{1}{2q}$ and $q\le 1$ one has $(ax+1)^q\ge 2qa(ax+1)^{2q-1}$, hence $(ax+1-a)^{2q}-(ax-a)^{2q}+(ax+1)^q>0$ and the claim follows.

So take $t_k=(ak+1)^q,\,k\ge 0$ with $\frac{1}{2}\le q\le 1$ and $0<a\le\frac{1}{2q}$ and let $\e_k=\frac{c}{k^p},\,c,p>0.$ Then in this particular case \eqref{tdiscgen} has the form:
$x_0,x_1\in\mathcal{H}$ and for all $k\ge 1$
\begin{equation}\label{tdiscgen1}\tag{TIREPROG-p}
\left\{\begin{array}{lll}
y_k= x_k+\frac{((ak+1)^q-1)((ak+1-a)^q-1)}{(ak+1-a)^{2q}}(x_k-x_{k-1})-\frac{-(ak+1)^{2q}+(ak+1)^q+(ak+1-a)^{2q}}{(ak+1-a)^{2q}(ak+1)^{q}} x_k
\\
x_{k+1}=\prox_{sf}\left(y_k-s\n g(y_k)-s\frac{c}{k^p} y_k\right).
\end{array}\right.
\end{equation}

As a model result we present the  strong convergence results of the generated sequences to the minimum norm minimizer of $f+g$ and fast rates concerning the function values in the generated sequences, obtained for this particular choice of the parameters.

\begin{theorem}\label{tmodel} For $s<\frac{1}{L}$  let $(x_k)_{k\ge 0},\,(y_k)_{k\ge 1}$ be the sequences generated by Algorithm \eqref{tdiscgen1}.

\begin{enumerate}
\item[(i)]  If $\frac{1}{2}\le q<1$ and  $p<2q$ then $(x_k)$ converges strongly to $x^*$, where $\{x^*\}=\pr_{\argmin (f+g)}(0)$ is the minimum norm minimizer of our objective function $f+g.$ Moreover  $\| x_k - y_k \| \ = \ o\left( \sqrt{\e_k} \right) \mbox{ as } k \to +\infty,$ hence $(y_k)$ also converges strongly to $x^*.$
\\
Further, the following estimates hold.\\
$(f+g)(x_k)-\min (f+g)=\mathcal{O}\left(k^{-p}\right),\mbox{ as }k\to+\infty,$ $\| x_k - x_{k-1} \| \ = \ o\left(k^{-\frac{p}{2}} \right) \mbox{ as } k \to +\infty,$ and there exists $u_k\in\p f(x_k)+\n g(x_k)$ such that
$\| u_k \| = \ o\left( k^{-\frac{p}{2}}\right) \mbox{ as } k \to +\infty.$
\item[(ii)]  If $\frac{1}{2}\le q\le 1$ and  $p\ge2q$ and if $a=\frac12,\,q=1$ then $p>2,$ then
$(f+g)(x_k)-\min (f+g)=\mathcal{O}\left(k^{-2q}\right),\mbox{ as }k\to+\infty.$
\end{enumerate}
\end{theorem}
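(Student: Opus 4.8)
The plan is to obtain Theorem~\ref{tmodel} as a specialization of the two general statements the paper will establish for \eqref{tdiscgen}: a strong-convergence-with-fast-rates theorem valid when the explicit Tikhonov parameter dominates, and an optimal-rate theorem valid when the inertial part dominates. The first, essentially bookkeeping, task is therefore to record the asymptotics of the two regularization parameters for the concrete choice $t_k=(ak+1)^q$, $\e_k=c\,k^{-p}$. Writing $t_k-t_{k-1}\sim a^q q\,k^{q-1}$ and $t_k+t_{k-1}\sim 2a^q k^q$, a direct expansion of the numerator $-t_k^2+t_k+t_{k-1}^2=t_k-(t_k-t_{k-1})(t_k+t_{k-1})$ gives $c_k:=\frac{-t_k^2+t_k+t_{k-1}^2}{t_{k-1}^2 t_k}\sim C\,k^{-2q}$, while $\e_k\sim c\,k^{-p}$ and the inertial coefficient $\frac{(t_k-1)(t_{k-1}-1)}{t_{k-1}^2}\to 1$. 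Thus $p<2q$ means the explicit Tikhonov term $\e_k$ decays strictly slower than the implicit one $c_k$, which places us in the strong-convergence regime, whereas $p\ge 2q$ places us in the optimal-rate regime; the degenerate corner $a=\tfrac12,\,q=1$ makes the numerator of $c_k$ collapse to a constant (so $c_k\sim k^{-3}$), which is exactly why one must then demand the strict inequality $p>2$.

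The analytical engine behind both regimes is a discrete Lyapunov argument, which I would set up as follows. From the proximal step there is $\xi_{k+1}\in\partial f(x_{k+1})$ with $\tfrac1s(y_k-x_{k+1})=\nabla g(y_k)+\e_k y_k+\xi_{k+1}$, so that $u_{k+1}:=\xi_{k+1}+\nabla g(x_{k+1})\in\partial(f+g)(x_{k+1})$; combining the three-point convexity inequalities for $f$ and $g$ with the descent lemma, and exploiting $s<\tfrac1L$ to retain a strictly positive surplus $\tfrac12(\tfrac1s-L)\|x_{k+1}-y_k\|^2$, yields a fundamental inequality in which the special forms of the inertial and auxiliary-Tikhonov parameters are tuned so that the energy
\[
\mathcal{E}_k=t_{k-1}^2\big((f+g)(x_k)-\min(f+g)\big)+\frac{1}{2s}\|v_k\|^2+\frac{t_{k-1}^2\e_k}{2}\|x_k\|^2,
\]
with $v_k$ a momentum-plus-anchor combination of the form $(t_k-1)(x_k-x_{k-1})+(x_k-x^*)$ dictated by the algorithm, is almost non-increasing up to error terms governed by the second differences of $t_k^2$ and by the monotone decay of $\e_k$ and $c_k$. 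In the regime $p\ge 2q$ these errors are summable (strictly so in the corner case, whence $p>2$), $\mathcal{E}_k$ stays bounded, and dividing by $t_{k-1}^2\sim k^{2q}$ delivers $(f+g)(x_k)-\min(f+g)=\mathcal{O}(k^{-2q})$, which is part (ii).

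For part (i) I would run the same energy anchored at $x^*=\pr_{\argmin(f+g)}(0)$, but now extract from the surplus term the summability of a weighted velocity series, which by the usual ``summable weights plus eventual monotonicity'' lemma upgrades the crude bounds to $\|x_k-x_{k-1}\|=o(k^{-p/2})=o(\sqrt{\e_k})$ and $\|u_k\|=o(k^{-p/2})$, while boundedness of $\mathcal{E}_k$ now yields only the Tikhonov-perturbed rate $(f+g)(x_k)-\min(f+g)=\mathcal{O}(\e_k)=\mathcal{O}(k^{-p})$. Since $y_k-x_k=\frac{(t_k-1)(t_{k-1}-1)}{t_{k-1}^2}(x_k-x_{k-1})-c_k x_k$ with $c_k\sim k^{-2q}=o(\sqrt{\e_k})$ (as $p<2q<4q$) and $(x_k)$ bounded, the velocity estimate immediately gives $\|x_k-y_k\|=o(\sqrt{\e_k})$, hence $y_k\to x^*$ once $x_k\to x^*$. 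The genuinely hard step — and the whole reason two Tikhonov terms are needed — is upgrading boundedness to full strong convergence $\|x_k-x^*\|\to 0$. Here I would invoke the variational characterization $\langle x^*,z-x^*\rangle\ge 0$ for every $z\in\argmin(f+g)$ to derive, from the fundamental inequality, a quasi-Fej\'er recursion $\|x_{k+1}-x^*\|^2\le(1-\delta_k)\|x_k-x^*\|^2+\delta_k\omega_k+r_k$ with vanishing damping $\delta_k$ supplied by the regularization parameters, $\omega_k\to 0$ and $\sum_k r_k<\infty$, and close the argument with a discrete Opial/Gronwall-type lemma. The main obstacle is precisely calibrating this recursion: ensuring that the damping series attached to $\e_k$, reinforced by the second Tikhonov term $c_k x_k$, dominates the residual errors exactly under $p<2q$, which is the quantitative content of the claimed correlation between the inertial and Tikhonov parameters.
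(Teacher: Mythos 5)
Your first paragraph is essentially the paper's own route: Theorem \ref{tmodel} is obtained by specializing two general results (Theorems \ref{tcrate} and \ref{fastrates}) to $t_k=(ak+1)^q$, $\e_k=ck^{-p}$, and the asymptotics you record --- $\frac{-t_k^2+t_k+t_{k-1}^2}{t_{k-1}^2t_k}\sim Ck^{-2q}$, the collapse of the numerator to a constant when $a=\frac12,\,q=1$, and the dichotomy $p<2q$ versus $p\ge 2q$ --- are exactly the verifications carried out in Theorems \ref{tcratep} and \ref{fastratesp}. The gaps are in your sketch of the general machinery. In regime (i) your energy $\mathcal{E}_k$ cannot stay bounded, and your rate derivation hinges on that: since $\mathcal{E}_k$ contains the nonnegative term $t_{k-1}^2\big((f+g)(x_k)-\min(f+g)\big)$, boundedness would give $(f+g)(x_k)-\min(f+g)=\mathcal{O}(k^{-2q})$, strictly better than the claimed $\mathcal{O}(k^{-p})$ and precisely the FISTA-type rate that is incompatible with strong convergence in this regime. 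What is actually available (and what the paper proves by multiplying the energy inequality by $\pi_k=1/\prod(1-\a_i)$, summing, and applying Ces\`aro--Stolz) is only $E_k=o(t_k^2\e_k)$ with $t_k^2\e_k\to+\infty$; the weighted error terms coming from $\a_k\|x^*\|^2$ and $st_k\e_k(\|x^*\|^2-\|\ol x_{k+1}\|^2)$ are not bounded in Ces\`aro mean in this regime. All the estimates in (i) --- $\mathcal{O}(\e_k)$ for the values, $o(\sqrt{\e_k})$ for the velocity and the subgradient --- are extracted from that $o(t_k^2\e_k)$ bound, so this step of your argument fails as written.

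The second, more serious gap concerns strong convergence itself. You defer it to a quasi-Fej\'er recursion $\|x_{k+1}-x^*\|^2\le(1-\d_k)\|x_k-x^*\|^2+\d_k\omega_k+r_k$, but you do not explain how such a recursion could be extracted when the inertial coefficient tends to $1$: the momentum does not vanish and must be carried along, which is exactly why the paper works with the combination $\eta_k=t_{k-1}x_k+(1-t_{k-1})x_{k-1}$ rather than with $x_k$ alone. The paper's actual mechanism is absent from your proposal: the potential energy is anchored at $\ol x_k$, the minimizer of the regularized functional $F_k=f+g+\frac{\e_k}{2}\|\cdot\|^2$ (not at $\min(f+g)$ plus a separate $\frac{t_{k-1}^2\e_k}{2}\|x_k\|^2$ term); one proves $F_k(x_k)-F_k(\ol x_k)=o(\e_k)$ and then concludes from the $\e_k$-strong convexity inequality $F_k(x_k)-F_k(\ol x_k)\ge\frac{\e_k}{2}\|x_k-\ol x_k\|^2$ together with the classical fact $\ol x_k\to x^*$. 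Anchoring at $\ol x_k$ is also what makes the telescoping work, since the increments $F_{k+1}(\ol x_{k+1})-F_k(\ol x_k)$ must be controlled (Lemma \ref{forleftside}, using $\|\ol x_{k+1}-\ol x_k\|\le\frac{\e_k-\e_{k+1}}{\e_k}\|\ol x_{k+1}\|$). Without this ingredient, or a fully worked substitute for it, the central claim of part (i) is not established.
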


\begin{remark} Note that we cannot allow $q=1$ or $p\ge 2q$ in Theorem \ref{tmodel} (i). That is the reason why,  in concordance to the previously presented results concerning  continuous dynamical systems and algorithms,  neither Algorithm \eqref{tdiscgen1} will  provide the rate $(f+g)\left(x_k\right) - \min (f+g) =O\left(k^{-2}\right)$ as $k\to+\infty$ when we obtain the strong convergence of the generated sequences.
However, when $p$ is very close to $2$, then the convergence rate of the values  $(f+g)(x_k)-\min (f+g)=\mathcal{O}\left(k^{-p}\right),\mbox{ as }k\to+\infty,$ is  practically  as good as the rate $ (f+g)\left(x_k\right) - \min (f+g) =O\left(k^{-2}\right),\mbox{ as }k\to+\infty,$ obtained for Nesterov's algorithm or FISTA, and in contrast to the latter we obtain strong convergence of the generated sequences to the minimum norm minimizer of our objective function.

Nevertheless, according to (ii) we are able to obtain the rate $(f+g)\left(x_k\right) - \min (f+g) =O\left(k^{-2}\right)$ as $k\to+\infty$ in case $q=1$ and $p\ge 2.$ Unfortunately in this case concerning the iterates of \eqref{tdiscgen1} no convergence results are covered by our analysis.
\end{remark}

\subsection{The outline of the paper}

The paper is organized as follows. In the next section we prove the main result of the paper. We obtain strong convergence of the sequences generated by Algorithm \eqref{tdiscgen} and also fast convergence of the potential energy and discrete velocity. We give a special attention to the case when the optimal rate of order $\mathcal{O}(k^{-2})$ for the potential energy $(f+g)(x_k)-\min(f+g)$ can be obtained. In section 3 we consider the parameters in a simple form and discuss the conditions these parameters must satisfy in order to obtain strong convergence of the generated sequences to the minimum norm minimizer. We also discuss  the correlation between the parameters $t_k$ and $\e_k$. Further, in section 4 via some numerical experiments we show that Algorithm \eqref{tdiscgen} indeed assures the convergence of the generated sequences to a minimal norm solution and has a very good behavior compared to some well known algorithms from the literature. We also show that the use of two Tikhonov regularization terms in our algorithm is essential for obtaining the strong convergence of the generated sequences to the minimum norm minimizer of the objective function. Finally, we conclude our paper and we outline some perspectives.

\section{Strong convergence and fast rates}

Let us reformulate \eqref{tdiscgen} in a more convenable form. To this purpose, for $k\in\N$ consider the strongly convex function $g_k(x)=g(x)+\frac{\e_k}{2}\|x\|^2.$ Then, $\n g_k(x)=\n g(x)+\e_kx$, hence  \eqref{tdiscgen} can equivalently be written as:
$x_0,x_1\in\mathcal{H}$ and for all $k\ge 1$
\begin{equation}\label{tdiscgen2}\tag{TIREPROG-str}
\left\{\begin{array}{lll}
y_k= x_k+\frac{(t_{k}-1)(t_{k-1}-1)}{t_{k-1}^2}(x_k-x_{k-1})-\frac{-t_{k}^2+t_{k}+t_{k-1}^2}{t_{k-1}^2t_{k}} x_k
\\
x_{k+1}=\prox_{sf}(y_k-s\n g_k(y_k)).
\end{array}\right.
\end{equation}

Note that $\n g_k$ is also Lipschitz continuous, with Lipshitz constant $L_k=L+\e_k$ and taking into account that $\e_k$ is non-increasing and $\e_k\to 0$ as $k\to+\infty$ we have that for all $0<s<\frac{1}{L}$  there exists $ k_0\in\N$ such that $\e_{k}\le \frac{1}{s}-L$ for all $k\ge k_0$, hence the stepsize $s$ satisfies the following property.
$$(S)\,\,\,\,s\in \left(0,\frac{1}{L+\e_{k_0}}\right]\subseteq \left(0,\frac{1}{L+\e_k}\right]\subset\left(0,\frac{1}{L}\right),\mbox{ for all }k\ge k_0.$$

\begin{remark} If we apply Lemma \ref{modeBT}, (see Appendix), to the function $f+g_k$ and stepsize $s\in\left(0,\frac{1}{\e_k+L}\right]$  we get
\begin{align}\label{tf4}
(f+g_k)(\prox\nolimits_{sf}(y-s\n g_k(y))) &\le (f+g_k)(x)-\frac{1}{s}\left\<y-\prox\nolimits_{sf}(y-s\n g_k(y)),x-y\right\>\\
\nonumber&-\frac{1}{2s}\|\prox\nolimits_{sf}(y-s\n g_k(y))-y\|^2-\frac{s}{2}\|\n g_k(y)-\n g_k(x)\|^2,\forall x,y\in\mathcal{H}.
\end{align}
\end{remark}

The following general result holds.

\begin{theorem}\label{tcrate} For a sequence $(t_k)_{k\ge 0}$ satisfying \eqref{condT} and the stepsize $s<\frac{1}{L}$ let $(x_k)_{k\ge 0},\,(y_k)_{k\ge 1}$ be the sequences generated by Algorithm \eqref{tdiscgen}.

Let $k_0$ such that $s\in\left(0,\frac{1}{L+\e_{k_0}}\right]$.
Assume that   $\liminf_{k\to+\infty}\frac{-t_k^2+t_k+t_{k-1}^2}{t_{k}}>0$ and the sequence $(t_k^2\e_k)_{k\ge k_0-1}$ is increasing. Assume  further, that  $\lim_{k\to+\infty}t_k^2\e_k=+\infty$ and
$\lim_{k\to+\infty}\frac{ t_k(\e_k-\e_{k+1})}{\e_k} =0$.

Then, $(x_k)$ converges strongly to $x^*$, where $\{x^*\}=\pr_{\argmin (f+g)}(0)$ is the minimum norm minimizer of our objective function $f+g.$ Moreover  $\| x_k - y_k \| \ = \ o\left( \sqrt{\e_k} \right) \mbox{ as } k \to +\infty,$ hence $(y_k)$ also converges strongly to $x^*.$
\\
Further, the following estimates hold.\\
$F_k(x_{k})-F_{k}(\ol x_{k})=o(\e_k)\mbox{ as }k\to+\infty,$
where $F_k(x)=(f+g_k)(x)=f(x)+g(x)+\frac{\e_k}{2}\|x\|^2$ and $\ol x_k$ is the unique minimizer of $F_k.$
$$(f+g)(x_k)-\min (f+g)=\mathcal{O}\left(\e_k\right),\mbox{ as }k\to+\infty,$$
$$\| x_k - x_{k-1} \| \ = \ o\left( \sqrt{\e_k} \right) \mbox{ as } k \to +\infty,$$
and there exists $u_k\in\p f(x_k)+\n g(x_k)$ such that
$$\| u_k \| = \ o\left( \sqrt{\e_k} \right) \mbox{ as } k \to +\infty.$$
\end{theorem}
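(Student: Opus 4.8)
The plan is to reduce the analysis to the regularized functions $F_k=f+g_k$, whose $\e_k$-strong convexity is the engine behind strong convergence, and to run a discrete Lyapunov argument driven by the descent inequality \eqref{tf4}. First I would record the classical Tikhonov facts about the unique minimizer $\ol{x}_k$ of $F_k$: from the optimality condition $0\in\partial f(\ol{x}_k)+\n g(\ol{x}_k)+\e_k\ol{x}_k$ and monotonicity of $\partial f+\n g$ one gets $\|\ol{x}_k\|\le\|x^*\|$, the strong convergence $\ol{x}_k\to x^*=\pr_{\argmin(f+g)}(0)$, and the quantitative estimate $\|\ol{x}_k-\ol{x}_{k-1}\|\le\frac{\e_{k-1}-\e_k}{\e_k}\|\ol{x}_{k-1}\|$. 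The hypothesis $\lim_{k\to+\infty}\frac{t_k(\e_k-\e_{k+1})}{\e_k}=0$ makes this difference of order $o(\e_k/t_k)$, hence negligible at the relevant scale, and it also gives $\e_{k+1}/\e_k\to1$, which I will use to interchange $o(\sqrt{\e_k})$ and $o(\sqrt{\e_{k+1}})$ at the end.

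Next I would apply \eqref{tf4} with $y=y_k$ at the two points $x=\ol{x}_k$ and $x=x_k$, take a convex combination with weights of order $\frac{1}{t_k}$ and $\frac{t_k-1}{t_k}$, and invoke strong convexity $F_k(x)\ge F_k(\ol{x}_k)+\frac{\e_k}{2}\|x-\ol{x}_k\|^2$ to generate the extra quadratic terms. The aim is an inequality $\mathcal{E}_{k+1}\le\mathcal{E}_k+R_k$ for a discrete energy $\mathcal{E}_k=A_k\big(F_{k-1}(x_k)-F_{k-1}(\ol{x}_{k-1})\big)+\frac{1}{2s}\|v_k\|^2$, where $A_k\asymp t_{k-1}^2$ and $v_k$ is a linear combination of $x_k$, $x_{k-1}$ and the anchor $\ol{x}_{k-1}$ whose coefficients are dictated exactly by the inertial parameter $\frac{(t_k-1)(t_{k-1}-1)}{t_{k-1}^2}$ and the Tikhonov parameter $\frac{-t_k^2+t_k+t_{k-1}^2}{t_{k-1}^2t_k}$. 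These precise parameter forms are what make the squared-norm term telescope, and the assumption that $(t_k^2\e_k)$ is increasing is what guarantees that the strong-convexity contribution carries the correct sign so that consecutive leading coefficients fit together.

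The technical heart, and the step I expect to be the main obstacle, is this Lyapunov construction: choosing $A_k$ and $v_k$ so that the cross terms cancel, and then bounding the residual $R_k$. I expect $R_k$ to consist of terms of the type $t_{k-1}^2(\e_{k-1}-\e_k)\|x_k\|^2$ and $t_{k-1}\|x_k-\ol{x}_{k-1}\|\,\|\ol{x}_k-\ol{x}_{k-1}\|$, each of which is rendered summable (or $o(1)$ and absorbable into the negative terms) precisely by the two standing hypotheses on $(t_k)$ and $(\e_k)$. Once $(\mathcal{E}_k)$ is shown to be bounded, I read off $A_k\big(F_{k-1}(x_k)-F_{k-1}(\ol{x}_{k-1})\big)=\mathcal{O}(1)$; correcting for the change of regularization level by means of $\e_{k-1}-\e_k=o(\e_k)$ gives $F_k(x_k)-F_k(\ol{x}_k)=\mathcal{O}(t_k^{-2})$. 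Since $\lim_{k\to+\infty}t_k^2\e_k=+\infty$, dividing by $\e_k$ upgrades this automatically to the stated $F_k(x_k)-F_k(\ol{x}_k)=o(\e_k)$.

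All remaining assertions then follow from this single bound. Strong convexity gives $\frac{\e_k}{2}\|x_k-\ol{x}_k\|^2\le F_k(x_k)-F_k(\ol{x}_k)=o(\e_k)$, so $\|x_k-\ol{x}_k\|\to0$ and, with $\ol{x}_k\to x^*$, the strong convergence $x_k\to x^*$ follows. Boundedness of $\|v_k\|$ together with $\|x_k-\ol{x}_{k-1}\|\to0$ forces $(t_{k-1}-1)\|x_k-x_{k-1}\|=\mathcal{O}(1)$, i.e. $\|x_k-x_{k-1}\|=\mathcal{O}(t_k^{-1})=o(\sqrt{\e_k})$ because $t_k^2\e_k\to+\infty$. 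For the distance $\|x_k-y_k\|$ I use $x_k-y_k=-\frac{(t_k-1)(t_{k-1}-1)}{t_{k-1}^2}(x_k-x_{k-1})+\frac{-t_k^2+t_k+t_{k-1}^2}{t_{k-1}^2t_k}x_k$: the first summand is $o(\sqrt{\e_k})$, while \eqref{condT} yields $\frac{-t_k^2+t_k+t_{k-1}^2}{t_{k-1}^2t_k}<t_{k-1}^{-2}=o(\e_k)$ and $(x_k)$ is bounded, so the second summand is $o(\e_k)$; hence $\|x_k-y_k\|=o(\sqrt{\e_k})$ and $y_k\to x^*$. The subgradient estimate uses the proximal optimality $\frac{1}{s}(y_k-x_{k+1})-\n g(y_k)-\e_ky_k\in\partial f(x_{k+1})$, so that $u_{k+1}:=\frac{1}{s}(y_k-x_{k+1})-\n g(y_k)+\n g(x_{k+1})-\e_ky_k\in\partial f(x_{k+1})+\n g(x_{k+1})$ satisfies $\|u_{k+1}\|\le\left(\frac{1}{s}+L\right)\|x_{k+1}-y_k\|+\e_k\|y_k\|=o(\sqrt{\e_k})=o(\sqrt{\e_{k+1}})$. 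Finally, writing $(f+g)(x_k)=F_k(x_k)-\frac{\e_k}{2}\|x_k\|^2$ and bounding $F_k(\ol{x}_k)=\min F_k\le F_k(x^*)=\min(f+g)+\frac{\e_k}{2}\|x^*\|^2$, the estimate $F_k(x_k)-F_k(\ol{x}_k)=o(\e_k)$ yields $(f+g)(x_k)-\min(f+g)=\mathcal{O}(\e_k)$.
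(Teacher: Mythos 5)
Your overall architecture matches the paper's: a Lyapunov functional of the form $t_{k-1}^2\bigl(F_k(x_k)-F_k(\ol x_k)\bigr)+\|v_k\|^2$ with an anchor sequence built from $x_k,x_{k-1}$, the descent inequality \eqref{tf4} applied at two points with weights $\frac{t_k-1}{t_k}$ and $\frac{1}{t_k}$, and then strong convexity to convert $F_k(x_k)-F_k(\ol x_k)=o(\e_k)$ into strong convergence. (Minor difference: the paper anchors the second application of \eqref{tf4} at the fixed point $x^*$ with subgradient $\e_k x^*\in\p F_k(x^*)$, rather than at the moving point $\ol x_k$; this avoids some of the drift terms you anticipate.) However, there is a genuine gap at the step you yourself identify as the technical heart: you claim the residual $R_k$ can be made summable, or absorbed into the negative terms, so that $\mathcal{E}_k$ is \emph{bounded} and $F_k(x_k)-F_k(\ol x_k)=\mathcal{O}(t_k^{-2})$. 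This is not achievable under the hypotheses of this theorem. The unavoidable residual contains a term of the form $s\,t_k\e_k\bigl(\|x^*\|^2-\|\ol x_{k+1}\|^2\bigr)$, which is nonnegative, generally not summable (only $\|x^*\|^2-\|\ol x_{k+1}\|^2\to 0$ is known, with no rate), and cannot be absorbed into the damping $-\a_k\mathcal{E}_k$: since $\liminf_k\frac{-t_k^2+t_k+t_{k-1}^2}{t_k}>0$ gives $\a_k\sim t_k/t_{k-1}^2\sim 1/t_k$, absorbing it would require $t_k^2\e_k\bigl(\|x^*\|^2-\|\ol x_{k+1}\|^2\bigr)$ to be bounded, while the standing hypothesis forces $t_k^2\e_k\to+\infty$. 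Indeed, boundedness of the energy is exactly the regime of the paper's Theorem \ref{fastrates}, whose hypothesis $\limsup_k t_k\e_k/\a_k<+\infty$ is incompatible with $t_k^2\e_k\to+\infty$ here; your plan would in effect prove the $\mathcal{O}(t_k^{-2})$ rate and the strong convergence simultaneously, which the paper explicitly flags as the obstruction.

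The correct resolution, and what the paper does, is to settle for the weaker estimate $E_{n+1}=o(t_n^2\e_n)$, which is still exactly enough to give $F_n(x_n)-F_n(\ol x_n)=o(\e_n)$. One multiplies the recursion $E_{k+1}-E_k+\a_kE_k\le R_k$ by the integrating factor $\pi_k=\prod_{i\le k}(1-\a_i)^{-1}$, sums, and applies the Ces\`aro--Stolz theorem to the quotients $\frac{\sum_k \pi_k t_k\e_k(\|x^*\|^2-\|\ol x_{k+1}\|^2)}{t_n^2\e_n\pi_n}$ and $\frac{\sum_k \pi_k t_k^2(\e_k-\e_{k+1})\|\ol x_{k+1}\|^2}{t_n^2\e_n\pi_n}$; the limits vanish because $\|\ol x_{n+1}\|\to\|x^*\|$, $(t_n^2\e_n\pi_n)$ is increasing to $+\infty$, and $t_n(\e_n-\e_{n+1})/\e_n\to 0$. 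Your downstream deductions (strong convergence, $\|x_k-x_{k-1}\|$, $\|x_k-y_k\|$, the subgradient bound, and the passage to $(f+g)(x_k)-\min(f+g)$ via $F_k(\ol x_k)\le F_k(x^*)$) are all sound once the estimate $\|\eta_n-x^*\|=o(t_n\sqrt{\e_n})$ replaces your claimed $\|v_k\|=\mathcal{O}(1)$, but as written the proposal does not close the central estimate.
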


\begin{proof}
{\bf I. Lyapunov analysis} Assume  that $k\ge k_0$. Consider the energy functional
$$E_k=2st_{k-1}^2(F_k(x_{k})-F_k(\ol x_{k}))+\|\eta_k-x^*\|^2,\,k\ge k_0,$$
where $\eta_k=\frac{t_{k-1}^2}{t_k-1}y_k-\frac{t_{k-1}^2}{t_k}x_k,\,k\ge 1.$
We show that $E_k=o(t_k^2q_k)$ as $k\to+\infty.$

Indeed, since $s\in\left(0,\frac{1}{L+\e_{k_0}}\right],$ we get that $s$ satisfies (S), hence \eqref{tf4} can be used for every $k\ge k_0.$  We take $y=y_k,\,x=x_k$ in \eqref{tf4}  and we get
\begin{align}\label{tforp}
(f+g_k)(x_{k+1}) -(f+g_k)(x_k)&\le -\frac{1}{s}\left\<y_k-x_{k+1},x_k-y_k\right\>-\frac{1}{2s}\|x_{k+1}-y_k\|^2-\frac{s}{2}\|\n g_k(y_k)-\n g_k(x_k)\|^2.
\end{align}

Now we take $y=y_k,\,x=x^*$ in \eqref{tf4}  and  we get

\begin{align}\label{tforq}
(f+g_k)(x_{k+1}) - (f+g_k)(x^*)&\le-\frac{1}{s}\left\<y_k-x_{k+1},x^*-y_k\right\>-\frac{1}{2s}\|x_{k+1}-y_k\|^2-\frac{s}{2}\|\n g_k(y_k)-\n g_k(x^*)\|^2.
\end{align}


Consider the non-negative sequences $(p_k)_{k\ge 0},\,(q_k)_{k\ge 0}$ defined by $p_k=2s(t_k^2-t_k)$ and $q_k=2s t_k.$ We multiply \eqref{tforp} with $p_k$ and \eqref{tforq} with $q_k$ and add to get
\begin{align}\label{trightfirst1}
(p_k+q_k)F_k(x_{k+1})-&p_kF_k(x_{k})-q_kF_k(x^*)\le-\frac{s}{2}p_k\|\n g_k(y_k)-\n g_k(x_k)\|^2-\frac{s}{2}q_k\|\n g_k(y_k)-\n g_k(x^*)\|^2\\
\nonumber&+ \left\<\frac{1}{s}(y_k-x_{k+1}),(p_k+q_k)y_k-p_k x_k-q_k x^*-\frac{1}{2}(p_k+q_k)(y_k-x_{k+1})\right\>,
\end{align}
for all $k\ge k_0.$
In what follows we deal with the left-hand side of \eqref{trightfirst1}. By taking $x=x_{k+1},\,y=x_k,\,z=x^*$ and $z_k^*=\e_kx^*\in \p F_k(x^*)$ in Lemma \ref{forleftside} we get
\begin{align}\label{trightsecond}
&(p_{k}+q_{k})F_k(x_{k+1})-p_{k} F_k(x_k)-q_{k}F_k(x^*)\ge (p_{k}+q_{k})(F_{k+1}(x_{k+1})-F_{k+1}(\ol x_{k+1}))\\
\nonumber&-(p_{k-1}+q_{k-1})(F_k(x_k)-F_k(\ol x_{k}))+(p_{k-1}+q_{k-1}-p_{k})(F_k(x_k)-F_k(\ol x_{k}))\\
\nonumber&+(p_{k}+q_{k})\frac{\e_{k+1}-\e_k}{2}\|\ol x_{k+1}\|^2+q_{k}\e_k\<x^*,\ol x_{k+1}-x^*\>+p_k\frac{\e_{k}}{2}\|\ox_{k+1}-\ox_k\|^2+q_k\frac{\e_k}{2}\|\ox_{k+1}-x^*\|^2,
\end{align}
for all $k\ge k_0.$

Combining \eqref{trightfirst1} and \eqref{trightsecond} and taking into account the form of $p_k$ and $q_k$ we get

\begin{align}\label{trightthird}
&2st_k^2(F_{k+1}(x_{k+1})-F_{k+1}(\ol x_{k+1}))-2st_{k-1}^2(F_k(x_k)-F_k(\ol x_{k}))+2s(t_{k-1}^2-t_{k}^2+t_k)(F_k(x_k)-F_k(\ol x_{k}))\le\\
\nonumber&  \left\<y_k-x_{k+1},2t_k^2y_k-2(t_k^2-t_k) x_k-2t_k x^*-t_k^2(y_k-x_{k+1})\right\>-s^2(t_k^2-t_k)\|\n g_k(y_k)-\n g_k(x_k)\|^2\\
\nonumber&-s^2t_k\|\n g_k(y_k)-\n g_k(x^*)\|^2-s(t_k^2-t_k)\e_{k}\|\ox_{k+1}-\ox_k\|^2-s t_k\e_k\|\ox_{k+1}-x^*\|^2\\
\nonumber&+st_k^2(\e_k-\e_{k+1})\|\ol x_{k+1}\|^2+2st_k\e_k\<x^*,x^*-\ol x_{k+1}\>,
\end{align}
for all  $k\ge k_0.$

Now, according to Lemma \ref{foretandif} one has
\begin{align}\label{leftreldif1}
&
\left\<y_k-x_{k+1},2t_k^2y_k-2(t_k^2-t_k) x_k-2t_k x^*-t_k^2(y_k-x_{k+1})\right\>=\|\eta_k-x^*\|^2-\|\eta_{k+1}-x^*\|^2\\
\nonumber&-\frac{-t_k^2+t_k+t_{k-1}^2}{t_{k-1}^2}\|\eta_k-x^*\|^2-\frac{t_k^2-t_k}{t_{k-1}^2}\cdot\frac{-t_k^2+t_k+t_{k-1}^2}{t_{k-1}^2}\|\eta_k\|^2+\frac{-t_k^2+t_k+t_{k-1}^2}{t_{k-1}^2}\|x^*\|^2,
\end{align}
for all  $k\ge k_0.$ 
By injecting \eqref{leftreldif1} in \eqref{trightthird} yields

\begin{align}\label{trightthird1}
&(2st_k^2(F_{k+1}(x_{k+1})-F_{k+1}(\ol x_{k+1}))+\|\eta_{k+1}-x^*\|^2)-(2st_{k-1}^2(F_k(x_k)-F_k(\ol x_{k}))+\|\eta_k-x^*\|^2)\\
\nonumber&+\frac{t_{k-1}^2-t_{k}^2+t_k}{t_{k-1}^2}(2st_{k-1}^2(F_k(x_k)-F_k(\ol x_{k}))+\|\eta_k-x^*\|^2)\le\\
\nonumber& -s^2(t_k^2-t_k)\|\n g_k(y_k)-\n g_k(x_k)\|^2-s^2t_k\|\n g_k(y_k)-\n g_k(x^*)\|^2\\
\nonumber&-s(t_k^2-t_k)\e_{k}\|\ox_{k+1}-\ox_k\|^2-s t_k\e_k\|\ox_{k+1}-x^*\|^2-\frac{t_k^2-t_k}{t_{k-1}^2}\cdot\frac{-t_k^2+t_k+t_{k-1}^2}{t_{k-1}^2}\|\eta_k\|^2\\
\nonumber&+\frac{-t_k^2+t_k+t_{k-1}^2}{t_{k-1}^2}\|x^*\|^2+st_k^2(\e_k-\e_{k+1})\|\ol x_{k+1}\|^2+2st_k\e_k\<x^*,x^*-\ol x_{k+1}\>,
\end{align}
for all  $k\ge k_0.$

By neglecting the non-positive terms $-s^2(t_k^2-t_k)\|\n g_k(y_k)-\n g_k(x_k)\|^2$, $-s^2t_k\|\n g_k(y_k)-\n g_k(x^*)\|^2$,
$-s(t_k^2-t_k)\e_{k}\|\ox_{k+1}-\ox_k\|^2$ and $-\frac{t_k^2-t_k}{t_{k-1}^2}\cdot\frac{-t_k^2+t_k+t_{k-1}^2}{t_{k-1}^2}\|\eta_k\|^2$ we get

\begin{align}\label{trightthird2}
&(2st_k^2(F_{k+1}(x_{k+1})-F_{k+1}(\ol x_{k+1}))+\|\eta_{k+1}-x^*\|^2)-(2st_{k-1}^2(F_k(x_k)-F_k(\ol x_{k}))+\|\eta_k-x^*\|^2)\\
\nonumber&+\frac{-t_{k}^2+t_k+t_{k-1}^2}{t_{k-1}^2}(2st_{k-1}^2(F_k(x_k)-F_k(\ol x_{k}))+\|\eta_k-x^*\|^2)\le\\
\nonumber&-s t_k\e_k\|\ox_{k+1}-x^*\|^2+\frac{-t_k^2+t_k+t_{k-1}^2}{t_{k-1}^2}\|x^*\|^2+st_k^2(\e_k-\e_{k+1})\|\ol x_{k+1}\|^2+2st_k\e_k\<x^*,x^*-\ol x_{k+1}\>,
\end{align}
for all  $k\ge k_0.$

Now, by denoting $\a_k= \frac{-t_{k}^2+t_k+t_{k-1}^2}{t_{k-1}^2}$ as in the proof of Lemma \ref{foretandif}, in terms of the energy functional $E_k=2st_{k-1}^2(F_k(x_{k})-F_k(\ol x_{k}))+\|\eta_k-x^*\|^2,$ \eqref{trightthird2} reads as
\begin{align}\label{energy}
E_{k+1}-E_k+\a_kE_k&\le-s t_k\e_k\|\ox_{k+1}-x^*\|^2+\a_k\|x^*\|^2+st_k^2(\e_k-\e_{k+1})\|\ol x_{k+1}\|^2+2st_k\e_k\<x^*,x^*-\ol x_{k+1}\>\\
\nonumber&=\a_k\|x^*\|^2+st_k^2(\e_k-\e_{k+1})\|\ol x_{k+1}\|^2+st_k\e_k(\|x^*\|^2-\|\ol x_{k+1}\|^2)
\end{align}
for all  $k\ge k_0.$

{\bf II. The rate of $E_k$.} Consider now the sequence $\pi_k=\frac{1}{\prod_{i=k_0}^{k}(1-\a_i)}$ and observe that since $\a_i<1$ for all $i\ge k_0$ we have that $(\pi_k)_{k\ge k_0}$ is an increasing sequence.

Let us multiply \eqref{energy} with $\pi_k,\,k>k_0$ to get
\begin{equation}\label{energy1}
\pi_kE_{k+1}-\pi_{k-1}E_k\le \pi_k \a_k\|x^*\|^2+st_k^2(\e_k-\e_{k+1})\pi_k\|\ol x_{k+1}\|^2+st_k\e_k\pi_k(\|x^*\|^2-\|\ol x_{k+1}\|^2),\mbox{ for all }k>k_0.
\end{equation}

By summing up \eqref{energy1} from $k=k_0+1$ to $k=n>k_0+1$ we obtain
 \begin{align}\label{energy2}
\pi_n E_{n+1}&\le\sum_{k=k_0+1}^n \pi_k\a_k\|x^*\|^2+\sum_{k=k_0+1}^nst_k^2(\e_k-\e_{k+1})\pi_k\|\ol x_{k+1}\|^2+\sum_{k=k_0+1}^n st_k\e_k\pi_k(\|x^*\|^2-\|\ol x_{k+1}\|^2)\\
\nonumber&+\pi_{k_0}E_{k_0}.
\end{align}
Note that $\pi_k\a_k=\pi_k-\pi_{k-1},$ hence
$\sum_{k=k_0+1}^n \pi_k\a_k\|x^*\|^2=\pi_n\|x^*\|^2-\pi_{k_0}\|x^*\|^2.$
Consequently, \eqref{energy2} is equivalent to
 \begin{align}\label{energy3}
 E_{n+1}&\le\|x^*\|^2+\frac{\sum_{k=k_0+1}^nst_k^2(\e_k-\e_{k+1})\pi_k\|\ol x_{k+1}\|^2}{\pi_n}+\frac{\sum_{k=k_0+1}^n st_k\e_k\pi_k(\|x^*\|^2-\|\ol x_{k+1}\|^2)}{\pi_n}+\frac{C}{\pi_n},
\end{align}
where  $C=\pi_{k_0}E_{k_0}-\pi_{k_0}\|x^*\|^2.$

 Next we show that the right hand side of \eqref{energy3} is of order $o(t_n^2\e_n)\mbox{ as }n\to+\infty.$

Indeed, according to the hypotheses $t_n^2\e_n\to+\infty$ as $n\to+\infty$ and we know that $(\pi_n)$ is increasing, hence $\frac{\pi_n\|x^*\|+C}{\pi_n}=o(t_n^2\e_n)\mbox{ as }n\to+\infty.$

Further, since  according to the hypotheses  $\liminf_{n\to+\infty}\frac{t_{n-1}^2}{t_n}\a_n=\liminf_{n\to+\infty}\frac{-t_{n}^2+t_n+t_{n-1}^2}{t_{n}}>0$, the sequence  $(t_n^2\e_n\pi_n)$ is increasing  and $\lim_{n\to+\infty}t_n^2\e_n\pi_n=+\infty$,  by using the fact that $\lim_{n\to+\infty}(\|x^*\|^2-\|\ol x_{n+1}\|^2)=0$, via the Ces\`aro-Stolz theorem  we get
\begin{align*}
&\lim_{n\to+\infty}\frac{\sum_{k=k_0+1}^n st_k\e_k\pi_k(\|x^*\|^2-\|\ol x_{k+1}\|^2)}{t_n^2\e_n\pi_n}=s\lim_{n\to+\infty}\frac{t_n\e_n\pi_n (\|x^*\|^2-\|\ol x_{n+1}\|^2)}{t_n^2\e_n\pi_n-t_{n-1}^2\e_{n-1}\pi_{n-1}}\\
&=s\lim_{n\to+\infty}\frac{ \|x^*\|^2-\|\ol x_{n+1}\|^2}{t_n-\frac{t_{n-1}^2\e_{n-1}\pi_{n}(1-\a_n)}{t_n\e_n\pi_n}}=s\lim_{n\to+\infty}\frac{\|x^*\|^2-\|\ol x_{n+1}\|^2}{\frac{t_n^2\e_n-t_{n-1}^2\e_{n-1}}{t_n\e_n}+\a_n\frac{t^2_{n-1}\e_{n-1}}{t_n\e_n}}\le s\lim_{n\to+\infty}\frac{\|x^*\|^2-\|\ol x_{n+1}\|^2}{\a_n\frac{t^2_{n-1}\e_{n-1}}{t_n\e_n}} =0.
\end{align*}

Finally, according to the hypotheses $\lim_{n\to+\infty}\frac{ t_n(\e_n-\e_{n+1})}{\e_n} =0$, hence for some $M>0$ one has
\begin{align*}&\lim_{n\to+\infty}\frac{\sum_{k=k_0+1}^nst_k^2(\e_k-\e_{k+1})\pi_k\|\ol x_{k+1}\|^2}{t_n^2\e_n\pi_n}=
s\lim_{n\to+\infty}\frac{t_n^2(\e_n-\e_{n+1})\pi_n\|\ol x_{n+1}\|^2}{t_n^2\e_n\pi_n-t_{n-1}^2\e_{n-1}\pi_{n-1}}\\
&=s\lim_{n\to+\infty}\frac{ (\e_n-\e_{n+1})\|\ol x_{n+1}\|^2}{\e_n-\frac{t_{n-1}^2\e_{n-1}\pi_{n-1}}{t_n^2\pi_n}}=s\lim_{n\to+\infty}\frac{(\e_n-\e_{n+1})\|\ol x_{n+1}\|^2}{\frac{t_n^2\e_n-t_{n-1}^2\e_{n-1}}{t_n^2}+\a_n\frac{t^2_{n-1}\e_{n-1}}{t_n^2}}\\
&=s\lim_{n\to+\infty}\frac{\|\ol x_{n+1}\|^2}{\frac{t_n^2\e_n-t_{n-1}^2\e_{n-1}}{t_n\e_n}+\a_n\frac{t^2_{n-1}\e_{n-1}}{t_n\e_n}}\cdot\frac{t_n(\e_n-\e_{n+1})}{\e_n}\le M\lim_{n\to+\infty}\frac{ t_n(\e_n-\e_{n+1})}{\e_n} =0.
\end{align*}
Consequently, from \eqref{energy3} we get
\begin{equation}\label{energyrate}
E_{n+1}=o(t_n^2\e_n)\mbox{ as }n\to+\infty.
\end{equation}

{\bf III. Strong Convergence and Rates}

 Now using \eqref{energyrate} we derive the strong convergence of $(x_n)$ and $(y_n)$ to $x^*$ and the convergence rates concerning the potential energies, discrete velocity and values of the gradient stated in the conclusion of the theorem.

Indeed, taking into account the form of $E_n$, \eqref{energyrate} leads to
$(2st_{n}^2(F_{n+1}(x_{n+1})-F_{n+1}(\ol x_{n+1}))=o(t_n^2\e_n)\mbox{ as }n\to+\infty$. In other words,
$F_{n}(x_{n})-F_{n}(\ox_{n})=o(\e_n)\mbox{ as }n\to+\infty$ and by using \eqref{fontos5}, that is
$$(f+g)(x_n)-(f+g)(x^*)\le F_n(x_n)-F_n(\ox_n)+\frac{\e_n}{2}\|x^*\|^2,$$
 we get
$$(f+g)(x_n)-\min (f+g)=\mathcal{O}(\e_n)\mbox{ as }n\to+\infty.$$

In order to show strong convergence, we use \eqref{fontos3}, that is
$$F_n(x_n)-F_n(\ox_n)\ge\frac{\e_n}{2}\|x_n-\ox_n\|^2,$$
 and we get
\begin{align*}\lim_{n\to+\infty}\|x_n-x^*\|^2&\le 2\lim_{n\to+\infty}\left(\|x_n-\ol x_n\|^2+\|\ol x_n-x^*\|^2\right)\\
&\le4 \lim_{n\to+\infty}\frac{F_n(x_n)-F_n(\ol x_n)}{\e_n}+2\lim_{n\to+\infty}\|\ol x_n-x^*\|^2=0.
\end{align*}

Concerning the convergence rates  for the discrete velocity $\| x_n - x_{n-1} \|$ we conclude the following. From the definition of $E_n$ and the fact that $E_n=o\left(t_n^2 \e_n \right) \mbox{ as } n \to +\infty$
we have that
$$\| \eta_n - x^* \| = o\left( t_n \sqrt{\e_n} \right) \mbox{ as } n \to +\infty.$$
Now, using the definition of $\eta_n$ and the form of $y_n$ we derive

\begin{align*}
    \eta_n -x^*\ &= \frac{t_{n-1}^2}{t_n-1}\left(x_n+\frac{(t_{n}-1)(t_{n-1}-1)}{t_{n-1}^2}(x_n-x_{n-1})-\frac{-t_{n}^2+t_{n}+t_{n-1}^2}{t_{n-1}^2t_{n}} x_n\right)-\frac{t_{n-1}^2}{t_n}x_n-x^*\\
    &= (t_{n-1}-1) (x_n - x_{n-1}) + x_n-x^*,
\end{align*}
hence $(t_{n-1}-1) \|x_n - x_{n-1}\|=o\left( t_n \sqrt{\e_n} \right) \mbox{ as } n \to +\infty.$
Now, since $(t_n^2\e_n)$ is increasing, and $(\e_n)$ is non-increasing we deduce that $(t_n)$ is increasing. Further, since $\lim_{n\to+\infty} t_n^2\e_n=+\infty$ and $\lim_{n\to+\infty} \e_n=0$ we obtain that $\lim_{n\to+\infty} t_n=+\infty.$ Consequently,
$\lim_{n\to+\infty}\frac{t_{n-1}-1}{t_{n}}=1.$

Therefore,
$$ \|x_n - x_{n-1}\|=o\left(\sqrt{\e_n} \right) \mbox{ as } n \to +\infty.$$

From here and the fact that $y_n=x_n+\frac{(t_{n}-1)(t_{n-1}-1)}{t_{n-1}^2}(x_n-x_{n-1})-\frac{-t_{n}^2+t_{n}+t_{n-1}^2}{t_{n-1}^2t_{n}} x_n$, we deduce at once that $\|y_n-x_n\|=o(\sqrt{\e_n})$ as $n\to+\infty,$ hence in particular
$$\lim_{n\to+\infty}y_n=x^*.$$

Let us show the estimate concerning the sub-gradients, that is, there exists $u_n\in\p f(x_n)+\n g(x_n)$ such that
$$\| u_n \| = \ o\left( \sqrt{\e_n} \right) \mbox{ as } n \to +\infty.$$  To this purpose, we reformulate \eqref{tdiscgen} in terms of the resolvent operator of $\p f,$ i.e.
$$x_{n+1}+s\p f(x_{n+1})\ni y_n-s\n g(y_n)-s\e_n y_n.$$
In other words, there exists $u_{n+1}\in\p f(x_{n+1})+\n g(x_{n+1})$ such that
$$su_{n+1}= (y_n-x_{n+1})+s(\n g(x_{n+1})-\n g(y_n))-s\e_n y_n.$$
Now, by using the $L-$Lipschitz continuity of $\n g$ we get
$$\|\n g(x_{n+1})-\n g(y_n)\|\le\|\|\n g(y_n)-\n g(x_{n})\|+\|\n g(x_n)-\n g(x_{n+1})\|\le L\|y_n-x_n\|+L\|x_{n+1}-x_n\|,$$
hence $\|\n g(y_n)-\n g(x_{n+1})\|=o(\sqrt{\e_n})$ as $n\to+\infty.$ Further, $y_n-x_{n+1}=(y_n-x_n)+(x_n-x_{n+1})$, hence
$\|y_n-x_{n+1}\|=o(\sqrt{\e_n})$ as $n\to+\infty.$ Consequently, $\|(y_n-x_{n+1})+s(\n g(x_{n+1})-\n g(y_n))-s\e_n y_n\|=o(\sqrt{\e_n})$ as $n\to+\infty,$ that is,
$$\| u_{n+1} \| = \ o\left( \sqrt{\e_n} \right) \mbox{ as } n \to +\infty.$$
\end{proof}

\begin{remark}\label{afterstrconv} According to the hypotheses of Theorem \ref{tcrate} in order to obtain strong convergence of the sequences generated by \eqref{tdiscgen} we need to assume that $$\lim_{n\to+\infty} t_n^2\e_n=+\infty.$$ Beside the strong convergence of the generated sequences we obtained $(f+g)(x_n)-\min(f+g)=\mathcal{O}(\e_n)$ as $n\to+\infty.$ As we have seen in the proof of Theorem \ref{tcrate}, the above mentioned assumption leads to $\lim_{n\to+\infty }t_n=+\infty$ and $\lim_{n\to+\infty}\frac{t_{n-1}-1}{t_n}=1.$  Consequently, the assumption $\lim_{n\to+\infty} t_n^2\e_n=+\infty$ implies that the inertial parameter in Algorithm \eqref{tdiscgen}, that is $\frac{(t_{n}-1)(t_{n-1}-1)}{t_{n-1}^2}$ goes to $1$ as $n\to+\infty$ just as in FISTA. Further, the parameter $\frac{-t_{n}^2+t_{n}+t_{n-1}^2}{t_{n-1}^2t_{n}}$ is indeed a Tikhonov regularization parameter, since is non-negative and goes to $0$ as $n\to+\infty.$

However,  in case $$\limsup_{n\to+\infty}t_n^2\e_n<+\infty$$ even faster rates can be obtained for the potential energy $(f+g)(x_n)-\min(f+g).$ More precisely, if the right hand side of \eqref{energy3} is bounded from above, then  there exists $M>0$ such that

$$ E_{n+1}\le M,\mbox{ for all }n\ge k_0+1.$$
The latter relation may assure the rate $(f+g)(x_n)-\min(f+g)=\mathcal{O}\left(\frac{1}{t_n^2}\right)$ as $n\to+\infty$ just as the case of FISTA. Unfortunately, in that case we cannot obtain any convergence result concerning the sequences generated by \eqref{tdiscgen}.

The following result holds.
\end{remark}
\begin{theorem}\label{fastrates}
For a sequence $(t_k)_{k\ge 0}$ satisfying \eqref{condT} and the stepsize $s<\frac{1}{L}$ let $(x_k)_{k\ge 0}$ be the sequence generated by Algorithm \eqref{tdiscgen}.

Let $k_0\in\N$ such that $s\in\left(0,\frac{1}{L+\e_{k_0}}\right]$ and consider the sequences $\a_k=\frac{-t_k^2+t_k+t_{k-1}^2}{t_{k-1}^2}$ and $\pi_k=\frac{1}{\prod_{i=k_0}^k(1-\a_i)}$ as in the proof of Theorem \ref{tcrate}.

 Assume further that one of the set of the following conditions holds.
 \begin{enumerate}
\item[(a)] $\lim_{k\to+\infty}\pi_k=+\infty$, $\limsup_{k\to+\infty}\frac{t_k^2(\e_k-\e_{k+1})}{\a_k}<+\infty$ and $\limsup_{k\to+\infty}\frac{t_k\e_k}{\a_k}<+\infty;$
\item[(b)] $\lim_{k\to+\infty}\pi_k<+\infty$, $\sum_{k=1}^{+\infty}t_k^2(\e_k-\e_{k+1})<+\infty$ and $\sum_{k=1}^{+\infty}t_k\e_k<+\infty.$
 \end{enumerate}

Then,
$F_k(x_{k})-F_{k}(\ol x_{k})=\mathcal{O}\left(\frac{1}{t_{k-1}^2}\right)\mbox{ as }k\to+\infty,$
where $F_k(x)=(f+g_k)(x)=f(x)+g(x)+\frac{\e_k}{2}\|x\|^2$ and $\ol x_k$ is the unique minimizer of $F_k,$
and
$$(f+g)(x_k)-\min (f+g)=\mathcal{O}\left(\frac{1}{t_{k-1}^2}\right)+\mathcal{O}\left(\e_k\right),\mbox{ as }k\to+\infty.$$

\end{theorem}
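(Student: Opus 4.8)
The plan is to reuse, essentially verbatim, the energy estimate \eqref{energy3} obtained in the proof of Theorem \ref{tcrate}, since its derivation used only condition \eqref{condT}, the stepsize property (S), and the definition of $E_k$, all of which are again in force. Recall that \eqref{energy3} states
\begin{align*}
E_{n+1}&\le\|x^*\|^2+\frac{\sum_{k=k_0+1}^nst_k^2(\e_k-\e_{k+1})\pi_k\|\ol x_{k+1}\|^2}{\pi_n}\\
&\quad+\frac{\sum_{k=k_0+1}^n st_k\e_k\pi_k(\|x^*\|^2-\|\ol x_{k+1}\|^2)}{\pi_n}+\frac{C}{\pi_n},
\end{align*}
where $C=\pi_{k_0}E_{k_0}-\pi_{k_0}\|x^*\|^2$. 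As anticipated in Remark \ref{afterstrconv}, it suffices to prove that under either (a) or (b) the right-hand side remains bounded, i.e. $E_{n+1}\le M$ for some $M>0$ and all large $n$; the stated rates are then immediate. Two facts will be used throughout: the Tikhonov convergence $\ol x_k\to x^*$ makes $(\|\ol x_{k+1}\|)$ and $(|\|x^*\|^2-\|\ol x_{k+1}\|^2|)$ bounded, say by $R>0$, and the telescoping identity $\pi_k\a_k=\pi_k-\pi_{k-1}$ holds.

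Under (a), the two limsup hypotheses give constants $M_1,M_2>0$ with $t_k^2(\e_k-\e_{k+1})\le M_1\a_k$ and $t_k\e_k\le M_2\a_k$ for all large $k$, whence $t_k^2(\e_k-\e_{k+1})\pi_k\le M_1(\pi_k-\pi_{k-1})$ and $t_k\e_k\pi_k\le M_2(\pi_k-\pi_{k-1})$. Using the bound $R$, each numerator is then dominated by a telescoping sum; for example
$$\sum_{k=k_0+1}^n st_k^2(\e_k-\e_{k+1})\pi_k\|\ol x_{k+1}\|^2\le sM_1R^2(\pi_n-\pi_{k_0}),$$
so after division by $\pi_n$ it stays $\le sM_1R^2$. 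The second sum is treated the same way, and $C/\pi_n\to 0$ because $\pi_n\to+\infty$. Hence the right-hand side of \eqref{energy3} is bounded. Under (b), the sequence $(\pi_k)$ converges to a finite limit and satisfies $\pi_k\ge 1$, so it is bounded above and below. The summability assumptions $\sum_k t_k^2(\e_k-\e_{k+1})<+\infty$ and $\sum_k t_k\e_k<+\infty$, together with the bound $R$ and the boundedness of $(\pi_k)$, make both numerators in \eqref{energy3} bounded uniformly in $n$; dividing by $\pi_n\ge 1$ preserves this, and $C/\pi_n$ is bounded as well. In either case we conclude $E_{n+1}\le M$ for all large $n$.

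Finally I would extract the rates. From $E_{n+1}=2st_n^2(F_{n+1}(x_{n+1})-F_{n+1}(\ol x_{n+1}))+\|\eta_{n+1}-x^*\|^2$ and the non-negativity of both summands, $E_{n+1}\le M$ forces $F_{n+1}(x_{n+1})-F_{n+1}(\ol x_{n+1})\le M/(2st_n^2)$, that is $F_k(x_k)-F_k(\ol x_k)=\mathcal{O}(1/t_{k-1}^2)$. Then \eqref{fontos5}, namely $(f+g)(x_k)-(f+g)(x^*)\le F_k(x_k)-F_k(\ol x_k)+\frac{\e_k}{2}\|x^*\|^2$, yields $(f+g)(x_k)-\min(f+g)=\mathcal{O}(1/t_{k-1}^2)+\mathcal{O}(\e_k)$. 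Since the heavy lifting was already done in Theorem \ref{tcrate}, there is no serious obstacle; the only point requiring care is the uniform boundedness of the two weighted sums, which rests on the telescoping identity in case (a) and on the summability hypotheses in case (b).
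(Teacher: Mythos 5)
Your proposal is correct and follows essentially the same route as the paper: reuse \eqref{energy3} (whose derivation needs none of the extra hypotheses of Theorem \ref{tcrate}), show its right-hand side is bounded under (a) or (b), and read off the rates from $E_{n+1}\le M$ via \eqref{fontos5}. The only difference is that in case (a) you bound the weighted sums by direct telescoping through $\pi_k\a_k=\pi_k-\pi_{k-1}$, whereas the paper invokes the Ces\`aro--Stolz theorem; your version is, if anything, slightly cleaner since it uses only the $\limsup$ hypotheses.
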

\begin{proof}
We use the notations from the proof of Theorem \ref{tcrate}. Then \eqref{energy3} gives us
 \begin{align}\label{interm}
 E_{n+1}&\le\|x^*\|^2+\frac{\sum_{k=k_0+1}^nst_k^2(\e_k-\e_{k+1})\pi_k\|\ol x_{k+1}\|^2}{\pi_n}+\frac{\sum_{k=k_0+1}^n st_k\e_k\pi_k(\|x^*\|^2-\|\ol x_{k+1}\|^2)}{\pi_n}+\frac{C}{\pi_n},
\end{align}
for all $n> k_0+1$, where  $C=\pi_{k_0}E_{k_0}-\pi_{k_0}\|x^*\|^2.$

Note that if $(b)$ holds, then from \eqref{interm} we get that there exists $M>0$ such that
$E_{n+1}\le M$ consequently, by taking into account the form of $E_{n+1}$, we get
$F_{n+1}(x_{n+1})-F_{n+1}(\ol x_{n+1})\le \frac{M}{2s t_n^2}$, hence

$$F_n(x_{n})-F_{n}(\ol x_{n})=\mathcal{O}\left(\frac{1}{t_{n-1}^2}\right)\mbox{ as }n\to+\infty.$$
Now, by using \eqref{fontos5}, that is
$$(f+g)(x_n)-(f+g)(x^*)\le F_n(x_n)-F_n(\ox_n)+\frac{\e_n}{2}\|x^*\|^2,$$
 we get
$$(f+g)(x_n)-\min (f+g)=\mathcal{O}\left(\frac{1}{t_{n-1}^2}\right)+\mathcal{O}(\e_n)\mbox{ as }n\to+\infty.$$

It remained to show that if case $(a)$ holds, then the right hand side of \eqref{interm} is bounded. Note that $(\pi_n)_{n\ge k_0}$ is increasing and due to our assumption  we have $\lim_{n\to+\infty}\pi_n=+\infty$, further $\pi_n-\pi_{n-1}=\pi_n\a_n,$ hence, by Ces\`aro-Stolz theorem  we get

$$\lim_{n\to+\infty}\frac{\sum_{k=k_0+1}^nst_k^2(\e_k-\e_{k+1})\pi_k\|\ol x_{k+1}\|^2}{\pi_n}=\lim_{n\to+\infty}\frac{st_n^2(\e_n-\e_{n+1})\pi_n\|\ol x_{n+1}\|^2}{\pi_n\a_n}<+\infty$$
and
$$\lim_{n\to+\infty}\frac{\sum_{k=k_0+1}^n st_k\e_k\pi_k(\|x^*\|^2-\|\ol x_{k+1}\|^2)}{\pi_n}=\lim_{n\to+\infty}\frac{st_n\e_n\pi_n(\|x^*\|^2-\|\ol x_{n+1}\|^2)}{\pi_n\a_n}<+\infty.$$
Consequently, there exists $M>0$ such that
$E_{n+1}\le M$ and the rest of the proof goes analogously as in the case $(b).$

\end{proof}

\section{Particular choices of $t_k$ and $\e_k$.}

In this section we consider some particular choices of $t_k$ and $\e_k$ in Theorem \ref{tcrate} and Theorem \ref{fastrates}. More precisely we take $t_k=(ak+1)^q,\,k\ge 0$ with $\frac{1}{2}\le q\le 1$ and $0<a\le\frac{1}{2q}$ and let $\e_k=\frac{c}{k^p},\,c,p>0.$ As we seen in section \ref{onT}, in this case $t_k$ satisfies \eqref{condT} and \eqref{tdiscgen} has the form:
$x_0,x_1\in\mathcal{H}$ and for all $k\ge 1$
\begin{equation}\label{tdiscgen11}\tag{TIREPROG-p}
\left\{\begin{array}{lll}
y_k= x_k+\frac{((ak+1)^q-1)((ak+1-a)^q-1)}{(ak+1-a)^{2q}}(x_k-x_{k-1})-\frac{-(ak+1)^{2q}+(ak+1)^q+(ak+1-a)^{2q}}{(ak+1-a)^{2q}(ak+1)^{q}} x_k
\\
x_{k+1}=\prox_{sf}\left(y_k-s\n g(y_k)-s\frac{c}{k^p} y_k\right).
\end{array}\right.
\end{equation}

Concerning the strong convergence of the iterates of \eqref{tdiscgen11} the following result holds.

\begin{theorem}\label{tcratep} Assume that $\frac12\le q<1$ and $p<2q.$ For  the stepsize $s<\frac{1}{L}$ let $(x_k)_{k\ge 0},\,(y_k)_{k\ge 1}$ be the sequences generated by Algorithm \eqref{tdiscgen11}.

Then, $(x_k)$ converges strongly to $x^*$, where $\{x^*\}=\pr_{\argmin (f+g)}(0)$ is the minimum norm minimizer of our objective function $f+g.$ Moreover  $\| x_k - y_k \| \ = \ o\left( \sqrt{\e_k} \right) \mbox{ as } k \to +\infty,$ hence $(y_k)$ also converges strongly to $x^*.$
\\
Further, the following estimates hold.\\
$F_k(x_{k})-F_{k}(\ol x_{k})=o(k^{-p})\mbox{ as }k\to+\infty,$
where $F_k(x)=(f+g_k)(x)=f(x)+g(x)+\frac{c}{2k^p}\|x\|^2$ and $\ol x_k$ is the unique minimizer of $F_k.$
$$(f+g)(x_k)-\min (f+g)=\mathcal{O}\left(k^{-p}\right),\mbox{ as }k\to+\infty,$$
$$\| x_k - x_{k-1} \| \ = \ o\left( k^{-\frac{p}{2}} \right) \mbox{ as } k \to +\infty,$$
and there exists $u_k\in\p f(x_k)+\n g(x_k)$ such that
$$\| u_k \| = \ o\left( k^{-\frac{p}{2}}  \right) \mbox{ as } k \to +\infty.$$
\end{theorem}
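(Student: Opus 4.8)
The plan is to derive Theorem \ref{tcratep} as a direct specialization of the general Theorem \ref{tcrate}, by verifying that the concrete choices $t_k=(ak+1)^q$ and $\e_k=c/k^p$, with $\frac12\le q<1$ and $p<2q$, satisfy each of its hypotheses. Once that verification is complete, all conclusions of Theorem \ref{tcrate} transfer verbatim, and the only remaining task is to translate the rates stated there in terms of $\e_k$ into explicit powers of $k$: since $\e_k=c k^{-p}$ we have $o(\e_k)=o(k^{-p})$, $\mathcal{O}(\e_k)=\mathcal{O}(k^{-p})$, and $\sqrt{\e_k}=\sqrt{c}\,k^{-p/2}$ so that $o(\sqrt{\e_k})=o(k^{-p/2})$, which yields precisely the estimates for $\|x_k-y_k\|$, $F_k(x_k)-F_k(\ol x_k)$, $\|x_k-x_{k-1}\|$ and $\|u_k\|$ in the statement. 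That $(t_k)$ satisfies \eqref{condT} for these parameters was already shown in Section \ref{onT}, so that assumption comes for free.

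First I would check the growth condition $\lim_{k\to+\infty}t_k^2\e_k=+\infty$: here $t_k^2\e_k=c(ak+1)^{2q}k^{-p}\sim c a^{2q}k^{2q-p}$, and the exponent $2q-p$ is strictly positive by assumption, giving divergence. Next I would verify $\liminf_{k\to+\infty}\frac{-t_k^2+t_k+t_{k-1}^2}{t_k}>0$. Writing $t_{k-1}^2-t_k^2=(ak+1-a)^{2q}-(ak+1)^{2q}$ and applying the mean value theorem, this difference is of order $-2qa(ak+1)^{2q-1}$; dividing the whole numerator by $t_k=(ak+1)^q$ then gives $1-2qa(ak+1)^{q-1}+o(1)$, which tends to $1$ because $q<1$ forces $(ak+1)^{q-1}\to 0$. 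For $q=\frac12$ the difference $t_{k-1}^2-t_k^2=-a$ is constant and the ratio still tends to $1$. Hence the $\liminf$ equals $1>0$.

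The condition $\lim_{k\to+\infty}\frac{t_k(\e_k-\e_{k+1})}{\e_k}=0$ is where the restriction $q<1$ is genuinely needed. A mean value estimate gives $\e_k-\e_{k+1}=c\bigl(k^{-p}-(k+1)^{-p}\bigr)\sim cp\,k^{-p-1}$, so that $\frac{\e_k-\e_{k+1}}{\e_k}\sim p/k$; multiplying by $t_k\sim a^q k^q$ yields a quantity of order $k^{q-1}\to 0$. Had $q=1$ this limit would instead equal the nonzero constant $ap$, which is exactly why $q=1$ is excluded here and only the weaker rate of Theorem \ref{tmodel}(ii) survives in that regime.

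The step I expect to demand the most care is the monotonicity requirement that $(t_k^2\e_k)_{k\ge k_0-1}$ be increasing, because $k_0$ is fixed by the stepsize through $s\in\left(0,\frac{1}{L+\e_{k_0}}\right]$ and need not a priori lie past the onset of monotonicity. To handle this I would examine the logarithmic derivative of $h(x)=(ax+1)^{2q}x^{-p}$, namely $\frac{h'(x)}{h(x)}=\frac{2qa}{ax+1}-\frac{p}{x}$, which for large $x$ is asymptotic to $(2q-p)/x>0$; hence $h$ is eventually strictly increasing and $(t_k^2\e_k)$ is increasing from some index onward. Since $\e_k$ is nonincreasing, every index larger than the given $k_0$ still satisfies $s\le\frac{1}{L+\e_{k_0}}\le\frac{1}{L+\e_k}$, so I may enlarge $k_0$ freely to sit simultaneously beyond the monotonicity threshold and within the stepsize constraint. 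With all hypotheses of Theorem \ref{tcrate} thereby confirmed, its conclusions together with the rate translation recorded above complete the proof.
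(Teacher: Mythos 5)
Your proposal is correct and follows essentially the same route as the paper: both proofs reduce Theorem \ref{tcratep} to verifying the four hypotheses of Theorem \ref{tcrate} for $t_k=(ak+1)^q$, $\e_k=ck^{-p}$, including the same derivative computation for the monotonicity of $(t_k^2\e_k)$ and the same choice of enlarging $k_0$ (the paper takes $k_0=\max\left(\ol k,\,int\left(\frac{p}{a(2q-p)}\right)+2\right)$, which is exactly your observation that the stepsize condition survives any enlargement of $k_0$). The rate translation $o(\e_k)=o(k^{-p})$, $o(\sqrt{\e_k})=o(k^{-p/2})$ is likewise identical.
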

\begin{proof}
We just need to show that the conditions from the hypotheses of Theorem \ref{tcrate} hold.

First, let $\ol k=int\left(\frac{cs}{1-sL}\right)+1,$ where $int(x)$ denotes the integer part of $x$.   Then $s\in\left(0,\frac{1}{L+\e_{k}}\right]$ for all $k\ge\ol k.$ Consider now $k_0=\max\left(\ol k,int\left(\frac{p}{a(2q-p)}\right)+2\right).$
It remaind to show that
 \begin{enumerate}
 \item $\liminf_{k\to+\infty}\frac{-t_k^2+t_k+t_{k-1}^2}{t_{k}}>0.$
 \item $(t_k^2\e_k)_{k\ge k_0-1}$ is increasing.
 \item  $\lim_{k\to+\infty}t_k^2\e_k=+\infty$.
  \item $\lim_{k\to+\infty}\frac{ t_k(\e_k-\e_{k+1})}{\e_k} =0$.
\end{enumerate}

Note that since $q<1$ one has
$$\liminf_{k\to+\infty}\frac{-t_k^2+t_k+t_{k-1}^2}{t_{k}}=\lim_{k\to+\infty}\left(1+\frac{(ak+1-a)^{2q}-(ak+1)^{2q}}{(ak+1)^q}\right)=1.$$

Further, since $p<2q$ one has
$$t_k^2\e_k=c(ak+1)^{2q}k^{-p}$$
is an increasing sequence for all $k\ge int\left(\frac{p}{a(2q-p)}\right)+1.$ To see this just consider the real  valued real function
$\phi(x)=c(ax+1)^{2q}x^{-p}$ and observe that $\phi'(x)=c(ax+1)^{2q-1}x^{-p-1}(2qa x-p(ax+1))>0$ if $x>\frac{p}{a(2q-p)}.$

Moreover, since $p<2q$ one has $\lim_{k\to+\infty}t_k^2\e_k=\lim_{k\to+\infty}\frac{c(ak+1)^{2q}}{k^p}=+\infty.$

Finally, since $q<1$ one has $\lim_{k\to+\infty}\frac{ t_k(\e_k-\e_{k+1})}{\e_k} =\lim_{k\to+\infty}\frac{c(ak+1)^q}{k+1}\frac{(k+1)^p-k^p}{(k+1)^{p-1}}=0.$
\end{proof}

Concerning the case $p\ge 2q$ the following result holds.

\begin{theorem}\label{fastratesp}
Assume that $\frac12\le q\le1$ and $p\ge2q$ and in case $q=1,\,a=\frac12$ one has $p>2.$ For  the stepsize $s<\frac{1}{L}$ let $(x_k)_{k\ge 0}$ be the sequence generated by Algorithm \eqref{tdiscgen11}.

Then,
$$(f+g)(x_k)-\min (f+g)=\mathcal{O}\left(k^{-2q}\right),\mbox{ as }k\to+\infty.$$

\end{theorem}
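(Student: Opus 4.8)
The plan is to verify that the hypotheses of Theorem \ref{fastrates} hold for the specific choice $t_k=(ak+1)^q$ and $\e_k=ck^{-p}$, under the assumptions $\frac12\le q\le 1$, $p\ge 2q$ (with the extra condition $p>2$ in the degenerate case $q=1,a=\frac12$), and then simply invoke that theorem. Indeed, Theorem \ref{fastrates} delivers the conclusion $(f+g)(x_k)-\min(f+g)=\mathcal{O}(t_{k-1}^{-2})+\mathcal{O}(\e_k)$, and since $t_{k-1}^{-2}=(a(k-1)+1)^{-2q}=\mathcal{O}(k^{-2q})$ and $\e_k=ck^{-p}=\mathcal{O}(k^{-2q})$ whenever $p\ge 2q$, both error terms are of the asserted order $\mathcal{O}(k^{-2q})$. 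So the entire content of the proof reduces to checking condition (a) or condition (b) of Theorem \ref{fastrates}.

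The first task is to decide which of the two alternatives is in force by analyzing the product $\pi_k=\prod_{i=k_0}^k(1-\a_i)^{-1}$, where $\a_i=\frac{-t_i^2+t_i+t_{i-1}^2}{t_{i-1}^2}$. The asymptotic behaviour of $\a_k$ governs whether $\pi_k\to+\infty$ or stays bounded. First I would compute the leading order of $\a_k$: writing $t_k=(ak+1)^q$, a Taylor expansion gives $t_k^2-t_{k-1}^2=(ak+1)^{2q}-(a(k-1)+1)^{2q}\sim 2qa(ak+1)^{2q-1}$, so $\a_k\sim \frac{t_k-(t_k^2-t_{k-1}^2)}{t_{k-1}^2}$, and the dominant term depends on whether $q=1$ or $q<1$. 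For $q<1$ the term $t_k\sim(ak)^q$ dominates $t_k^2-t_{k-1}^2\sim 2qa(ak)^{2q-1}$ precisely when $q>2q-1$, i.e. always, so $\a_k\sim (ak)^q/(ak)^{2q}=(ak)^{-q}$, giving $\sum\a_k=+\infty$ and hence $\pi_k\to+\infty$, putting us in case (a). For $q=1$ the two contributions are of the same order $k^{-1}$ and $\a_k\sim (1-2a)/(ak)$, so if $a<\frac12$ we again have $\sum\a_k=+\infty$ and $\pi_k\to+\infty$ (case (a)), whereas the borderline $a=\frac12$ makes the leading term cancel, forcing a more delicate expansion — this is why the theorem singles out $q=1,a=\frac12$ and demands $p>2$.

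With $\pi_k\to+\infty$ established (the generic situation), I would verify the two remaining bounds of case (a), namely $\limsup_k \frac{t_k^2(\e_k-\e_{k+1})}{\a_k}<+\infty$ and $\limsup_k \frac{t_k\e_k}{\a_k}<+\infty$. Using $\e_k-\e_{k+1}\sim cpk^{-p-1}$ together with the asymptotics of $\a_k$ and $t_k$ found above, these reduce to elementary comparisons of powers of $k$. For instance, in the case $q<1$ one has $\frac{t_k\e_k}{\a_k}\sim \frac{(ak)^q\cdot ck^{-p}}{(ak)^{-q}}=c\,a^{2q}k^{2q-p}$, which is bounded exactly when $p\ge 2q$; the term $\frac{t_k^2(\e_k-\e_{k+1})}{\a_k}$ behaves like $k^{2q-p-1}\cdot k^q$ over $k^{-q}$ and is likewise controlled under $p\ge 2q$. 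The hard part will be the remaining edge case $q=1,a=\frac12$: here $\a_k$ decays faster than $k^{-1}$ (the $O(k^{-2})$ regime, as in the classical FISTA/$\alpha=3$ situation), and one must determine whether $\pi_k$ converges or diverges and then check the correspondingly sharper summability or boundedness conditions; this is where the strict requirement $p>2$ enters, ensuring $\sum t_k\e_k=\sum ck^{2-p}<+\infty$ if case (b) applies, or the ratio bounds if case (a) applies. Once the relevant alternative is confirmed for each sub-case, Theorem \ref{fastrates} yields the claimed rate immediately.
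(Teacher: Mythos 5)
Your proposal follows essentially the same route as the paper: verify condition (a) or (b) of Theorem \ref{fastrates} by computing the asymptotics of $\a_k$ (namely $\a_k\sim (ak)^{-q}$ for $q<1$, $\a_k\sim\frac{1-2a}{ak}$ for $q=1,\,a<\frac12$, and $\a_k=\mathcal{O}(k^{-2})$ for $q=1,\,a=\frac12$), deduce from this whether $\pi_k$ diverges (case (a)) or converges (case (b)), reduce the remaining hypotheses to comparisons of powers of $k$, and then absorb both error terms $\mathcal{O}(t_{k-1}^{-2})$ and $\mathcal{O}(\e_k)$ into $\mathcal{O}(k^{-2q})$ using $p\ge 2q$. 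The only shortfalls are minor: in the borderline case $q=1$, $a=\frac12$ you outline rather than execute the verification (there $\sum\a_k<+\infty$ forces case (b), and both $t_k\e_k$ and $t_k^2(\e_k-\e_{k+1})$ are $\mathcal{O}(k^{1-p})$ --- not $k^{2-p}$ as you wrote --- so $p>2$ indeed gives the required summability), but the case split, the key asymptotics, and the conclusions all match the paper's proof.
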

\begin{proof}
Let $k_0\in\N$ such that $s\in\left(0,\frac{1}{L+\e_{k_0}}\right]$ and consider the sequences
$$\a_k=\frac{-t_k^2+t_k+t_{k-1}^2}{t_{k-1}^2}=\frac{(ak+1-a)^{2q}-(ak+1)^{2q}+(ak+1)^{q}}{(ak+1-a)^{2q}},\,k\ge k_0$$ and $$\pi_k=\frac{1}{\prod_{i=k_0}^k(1-\a_i)}=\frac{(a k_0+1-a)^q}{(ak+1)^q}\prod_{i=k_0}^k\frac{(ai+1-a)^q}{(ai+1)^q-1}.$$

Note that if $q<1$ then $\a_k=\mathcal{O}\left(\frac{1}{k^q}\right)$ as $k\to+\infty.$ More precisely, there exists $C_1,C_2>0$ such that $\frac{C_1}{k^q}\le \a_k\le\frac{C_2}{k^q}$ for all $k\ge \ol k$ and $\ol k$ big enough. Consequently, by using the facts that
$$\frac{1}{\prod_{i=k_0}^{+\infty}(1-\a_i)}\ge \prod_{i=k_0}^{+\infty}(1+\a_i)\ge \sum_{i=k_0}^{+\infty}\a_i\ge \sum_{i=k_0}^{\ol k-1}\a_i+\sum_{i=\ol k}^{+\infty}\frac{C_1}{i^q}=+\infty.$$

Hence, for $q<1$ one has $\lim_{k\to+\infty}\pi_k=+\infty$ and we have to show that all the assumptions (a)  in the hypotheses of Theorem \ref{fastrates} hold. More precisely, we need to show that $\limsup_{k\to+\infty}\frac{t_k^2(\e_k-\e_{k+1})}{\a_k}<+\infty$ and $\limsup_{k\to+\infty}\frac{t_k\e_k}{\a_k}<+\infty.$

But $\e_k-\e_{k+1}=\mathcal{O}(k^{-p-1})$ as $k\to+\infty$, hence $\frac{t_k^2(\e_k-\e_{k+1})}{\a_k}=\mathcal{O}(k^{3q-p-1})$ as $k\to+\infty$ and consequently
$\limsup_{k\to+\infty}\frac{t_k^2(\e_k-\e_{k+1})}{\a_k}<+\infty$ provided $3q-p-1\le 0$. But due to our assumption $2q\le p$ and $q<1$ hence $3q-1<2q\le p$ and the conclusion follows.

Further, $\frac{t_k\e_k}{\a_k}=\mathcal{O}(k^{2q-p})$ as $k\to+\infty$, consequently
$\limsup_{k\to+\infty}\frac{t_k\e_k}{\a_k}<+\infty.$

Now, in case $q=1$ we get
$$\a_k=\frac{(ak+1-a)^{2}-(ak+1)^{2}+ak+1}{(ak+1-a)^{2}}=\frac{(a-2a^2)k+(a-1)^2}{(ak+1-a)^{2}},\,k\ge k_0$$
therefore if $a<\frac12$ one has $\a_k=\mathcal{O}(k^{-1})$ as  $k\to+\infty$ and when $a=\frac12$ one has $\a_k=\frac{1}{(k+1)^2}.$

For $a<\frac12$ proceeding as before we get
$$\frac{1}{\prod_{i=k_0}^{+\infty}(1-\a_i)}\ge \prod_{i=k_0}^{+\infty}(1+\a_i)\ge \sum_{i=k_0}^{+\infty}\a_i=+\infty,$$
hence,  $\lim_{k\to+\infty}\pi_k=+\infty$. Further,  we have $\frac{t_k^2(\e_k-\e_{k+1})}{\a_k}=\mathcal{O}(k^{2-p})$ as $k\to+\infty,$
and using the fact that $p\ge 2$ we  get
$\limsup_{k\to+\infty}\frac{t_k^2(\e_k-\e_{k+1})}{\a_k}<+\infty.$
Similarly, $\frac{t_k\e_k}{\a_k}=\mathcal{O}(k^{2-p})$ as $k\to+\infty,$ hence
$\limsup_{k\to+\infty}\frac{t_k\e_k}{\a_k}<+\infty.$

Consequently, for $a<\frac12$ and $q=1$ the conditions assumed in (a) in the hypotheses of Theorem \ref{fastrates} hold.

For $a=\frac12$ we get
$$\lim_{k\to+\infty}\pi_k=\frac{1}{\prod_{i=k_0}^{+\infty}(1-\a_i)}=\prod_{i=k_0}^{+\infty}\frac{(i+1)^2}{i(i+2)}=\lim_{k\to+\infty}\frac{k+1}{k_0}\cdot\frac{k_0+1}{k+2}<+\infty.$$

Let us show that the other assumptions stated at (b) in the hypotheses of Theorem \ref{fastrates} hold, that is $\sum_{k=1}^{+\infty}t_k^2(\e_k-\e_{k+1})<+\infty$ and $\sum_{k=1}^{+\infty}t_k\e_k<+\infty.$

Note that since $t_k^2(\e_k-\e_{k+1}),\,t_k\e_k=\mathcal{O}(k^{1-p})$ as  $k\to+\infty$ and $p>2$ the claims follow.

\end{proof}

\begin{remark} Note that in case $q=1$ we are able to obtain the rate $\mathcal{O}(k^{-2})$ for the potential energy $(f+g)(x_k)-\min (f+g),$ which makes \eqref{tdiscgen1} comparable with the famous FISTA algorithm.
The optimal rate is attained  for all $p\ge 2$, provided $a<\frac12.$ The case $a=\frac12$ is special in the sense that in order to obtain the optimal rate one must assume that $p>2.$ Nevertheless, in case $a=\frac12$ our  algorithm \eqref{tdiscgen1} has the simple form:

\begin{equation}\label{tdiscgen111}\tag{TIREPROG-spec}
\left\{\begin{array}{lll}
x_0,x_1\in\mathcal{H},\mbox{ for }k\ge 1\\
y_k= x_k+\frac{(k-1)k}{(k+1)^2}(x_k-x_{k-1})-\frac{2}{(k+1)^2(k+2)}x_k
\\
x_{k+1}=\prox_{sf}\left(y_k-s\n g(y_k)-s\frac{c}{k^p} y_k\right).
\end{array}\right.
\end{equation}
We emphasize that \eqref{tdiscgen111} can easily be implemented and may outperform FISTA as some numerical experiments show.
\end{remark}

\section{Numerical experiments}
In this section we consider some numerical experiments in order to sustain the theoretical results obtained in the paper.

 In our first experiment we analyze the speed of convergence of the potential energy $(f+g)(x_n)-\min(f+g)$, for different choices of the inertial parameter and the Tikhonov regularization parameters in \eqref{tdiscgen1} and  we show that indeed algorithm \eqref{tdiscgen1} has a superior behaviour compared to the famous FISTA algorithm.

 To this purpose, let us consider the  convex non-smooth function $f:\R^2\To\R,\, f(x,y)=\sqrt{(x^2+y^2)^3}.$ Then, the proximal operator of $sf,\,s>0$ is given by $\prox\nolimits_{s f}:\R^2\To\R^2,$
$$\prox\nolimits_{s f}(x,y)=\left(\frac{2}{1+\sqrt{1+12 s\sqrt{x^2+y^2}}}x,\frac{2}{1+\sqrt{1+12 s\sqrt{x^2+y^2}}}y\right).$$
Consider further the function $g:\R^2\To\R,\, g(x,y)= (4x -3y)^2.$ Then, $g$ is smooth and convex and its gradient is Lipschitz continuous, having  Lipschitz constant $L=40\sqrt{2}.$

Further $f+g$ has the global minimum at $x^*=(0,0)$, hence  $\min(f+g)=(f+g)(x^*)=0.$  For simplicity in the following experiments concerning Algorithm \eqref{tdiscgen1} we take everywhere $s = 0.017$, (which will always satisfy $s<\frac{1}{L}$), and fix the starting points $x_0=x_1=(1, -1)$. In our following experiment we consider everywhere the Tikhonov regularization parameter $c=1.$

First, we consider both the cases $p<2q$ and $p\ge 2q$ with the following values:
$$(a,q,p)\in\{(0.9,0.5,0.9),(0.9,0.5,2.1),(0.66,0.75,1.4),(0.66,0.75,2.1),(0.45,1,2),(0.45,1,2.5)\}.$$
We run Algorithm \eqref{tdiscgen1} until the potential energy $(f+g)(x_n)-\min(f+g)$ is less than $10^{-20}$, the results are shown in Figure \ref{fig1:sfig11}.

According to our results, in case $q=1$ the optimal rate of order $\mathcal{O}(n^{-2})$ can be obtained. Therefore, we also consider the special case of our algorithm, i.e. \eqref{tdiscgen2}, and for comparison purposes we consider the FISTA algorithm \eqref{fista}, with $t_k=0.5k+1$.
We run the algorithms until the potential energy $(f+g)(x_n)-\min(f+g)$ is less than $10^{-25}$, the results are shown in Figure \ref{fig1:sfig12}.

\begin{figure}[hbt!]
\begin{subfigure}{0.5\textwidth}
  \centering
  \includegraphics[width=.99\linewidth]{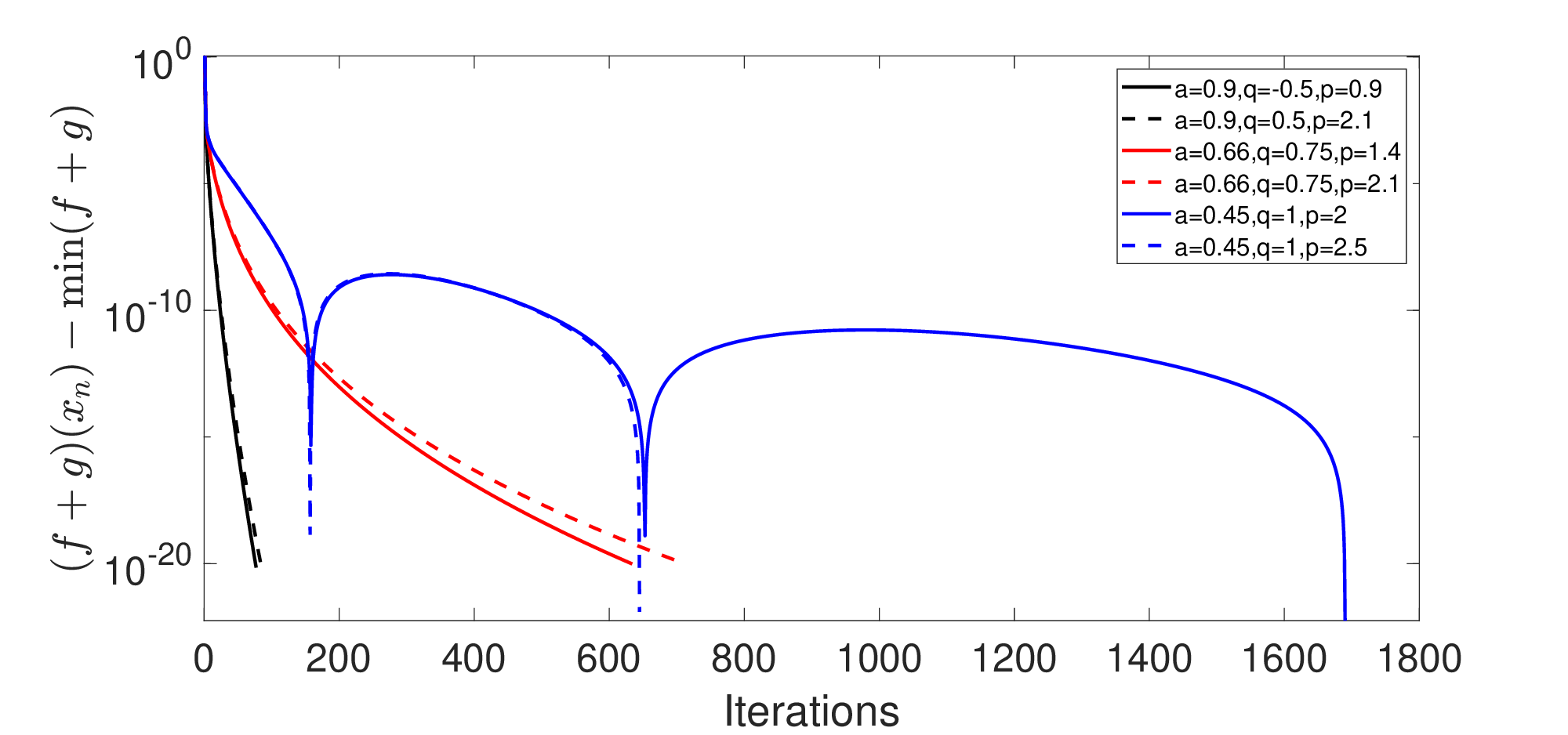}
  \caption{Different choices of the parameters in \eqref{tdiscgen1}}
  \label{fig1:sfig11}
\end{subfigure}
\begin{subfigure}{0.5\textwidth}
  \centering
  \includegraphics[width=.99\linewidth]{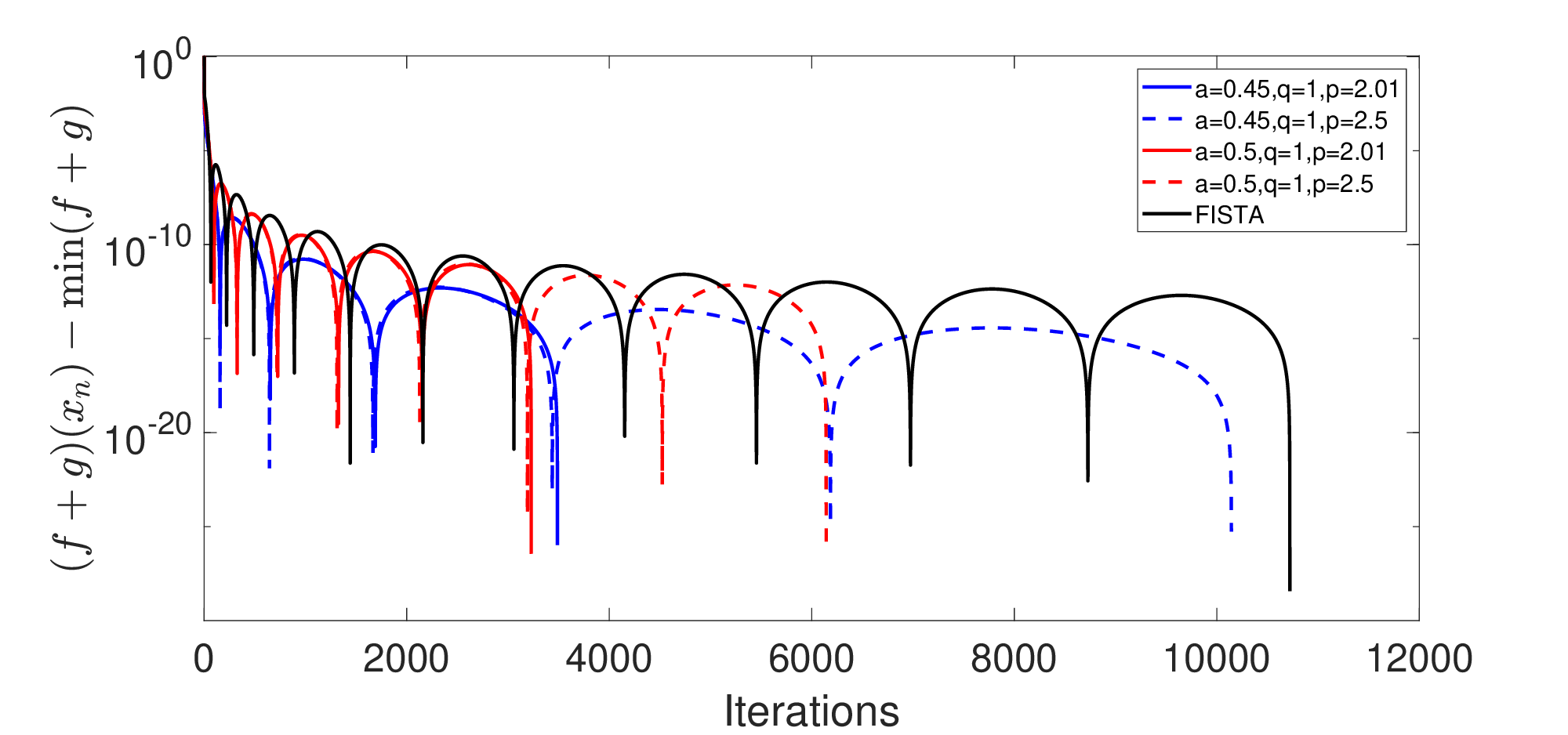}
  \caption{The case $q=1$ in \eqref{tdiscgen1} }
  \label{fig1:sfig12}
\end{subfigure}
 \caption{From a numerical point of view $q=1$ is not the best choice. Nevertheless, in case $q=1$ \eqref{tdiscgen1} might have a better behaviour than FISTA.}
\end{figure}
\vskip0.5cm

In our second experiment our aim is to show that in order to obtain convergence to the minimum norm minimizer of our objective function we need both Tikhonov regularization terms in \eqref{tdiscgen}. Hence, we consider the function $f:\R^2\to\R$, $f(x,y)=\frac12(x+2y)^2$ with
$$\prox\nolimits_{sf}(x,y)=\left(\frac{4s+1}{5s+1}x-\frac{2s}{5s+1}y,-\frac{2s}{5s+1}x+\frac{s+1}{5s+1}y\right).$$

Further, let $g:\R^2\to\R$, $g(x,y)=\frac14\left( x+2y\right)^2$ which is convex with $L$-Lipschitz continuous gradient.  Note that  the Lipschitz constant of $\n g$ is $L=\sqrt{10}.$ Obviously $\min(f+g)=0$ and the set of minimizers of $f+g$ is
$\argmin(f+g)=\left\{\left((x,-\frac{x}{2}\right):x\in\R\right\},$ hence the minimum norm minimizer is $x^*=(0,0).$

We take $s=0.031$ in algorithm \eqref{tdiscgen1} which always satisfies $s<\frac{1}{L}.$ Further we consider the starting points $x_0=x_1=(-1,1)$ and we fix $a=0.5,\,q=0.95,\, c=3$ and $p=1.7$ Note that $a<\frac{1}{2q}$ and $p<2q.$ We run \eqref{tdiscgen1} with $n=10^3$ iterations by considering the following instances.

a. We consider  both Tikhonov regularization terms in algorithm \eqref{tdiscgen1}, that is,
\begin{equation*}
\left\{\begin{array}{lll}
y_k= x_k+\frac{((ak+1)^q-1)((ak+1-a)^q-1)}{(ak+1-a)^{2q}}(x_k-x_{k-1})-\frac{-(ak+1)^{2q}+(ak+1)^q+(ak+1-a)^{2q}}{(ak+1-a)^{2q}(ak+1)^{q}} x_k
\\
x_{k+1}=\prox_{sf}\left(y_k-s\n g(y_k)-s\frac{c}{k^p} y_k\right).
\end{array}\right.
\end{equation*}

b. We renounce to the Tikhonov regularization term $\frac{-(ak+1)^{2q}+(ak+1)^q+(ak+1-a)^{2q}}{(ak+1-a)^{2q}(ak+1)^{q}} x_k$ in algorithm \eqref{tdiscgen1}, that is,
\begin{equation*}
\left\{\begin{array}{lll}
y_k= x_k+\frac{((ak+1)^q-1)((ak+1-a)^q-1)}{(ak+1-a)^{2q}}(x_k-x_{k-1})
\\
x_{k+1}=\prox_{sf}\left(y_k-s\n g(y_k)-s\frac{c}{k^p} y_k\right).
\end{array}\right.
\end{equation*}

c. We renounce to the Tikhonov regularization term $s\frac{c}{k^p} y_k$ in algorithm \eqref{tdiscgen1}, that is,
\begin{equation*}
\left\{\begin{array}{lll}
y_k= x_k+\frac{((ak+1)^q-1)((ak+1-a)^q-1)}{(ak+1-a)^{2q}}(x_k-x_{k-1})-\frac{-(ak+1)^{2q}+(ak+1)^q+(ak+1-a)^{2q}}{(ak+1-a)^{2q}(ak+1)^{q}} x_k
\\
x_{k+1}=\prox_{sf}\left(y_k-s\n g(y_k)\right).
\end{array}\right.
\end{equation*}

d. We renounce to both Tikhonov regularization terms in algorithm \eqref{tdiscgen1}, that is,
\begin{equation*}
\left\{\begin{array}{lll}
y_k= x_k+\frac{((ak+1)^q-1)((ak+1-a)^q-1)}{(ak+1-a)^{2q}}(x_k-x_{k-1})
\\
x_{k+1}=\prox_{sf}\left(y_k-s\n g(y_k)\right).
\end{array}\right.
\end{equation*}

The results are depicted at Figure 2 (a)-(d), where the first component of $x_n$ is depicted by blue and the second component of $x_n$  is depicted by red.

\begin{figure}[hbt!]
\begin{subfigure}{0.5\textwidth}
  \centering
  \includegraphics[width=.99\linewidth]{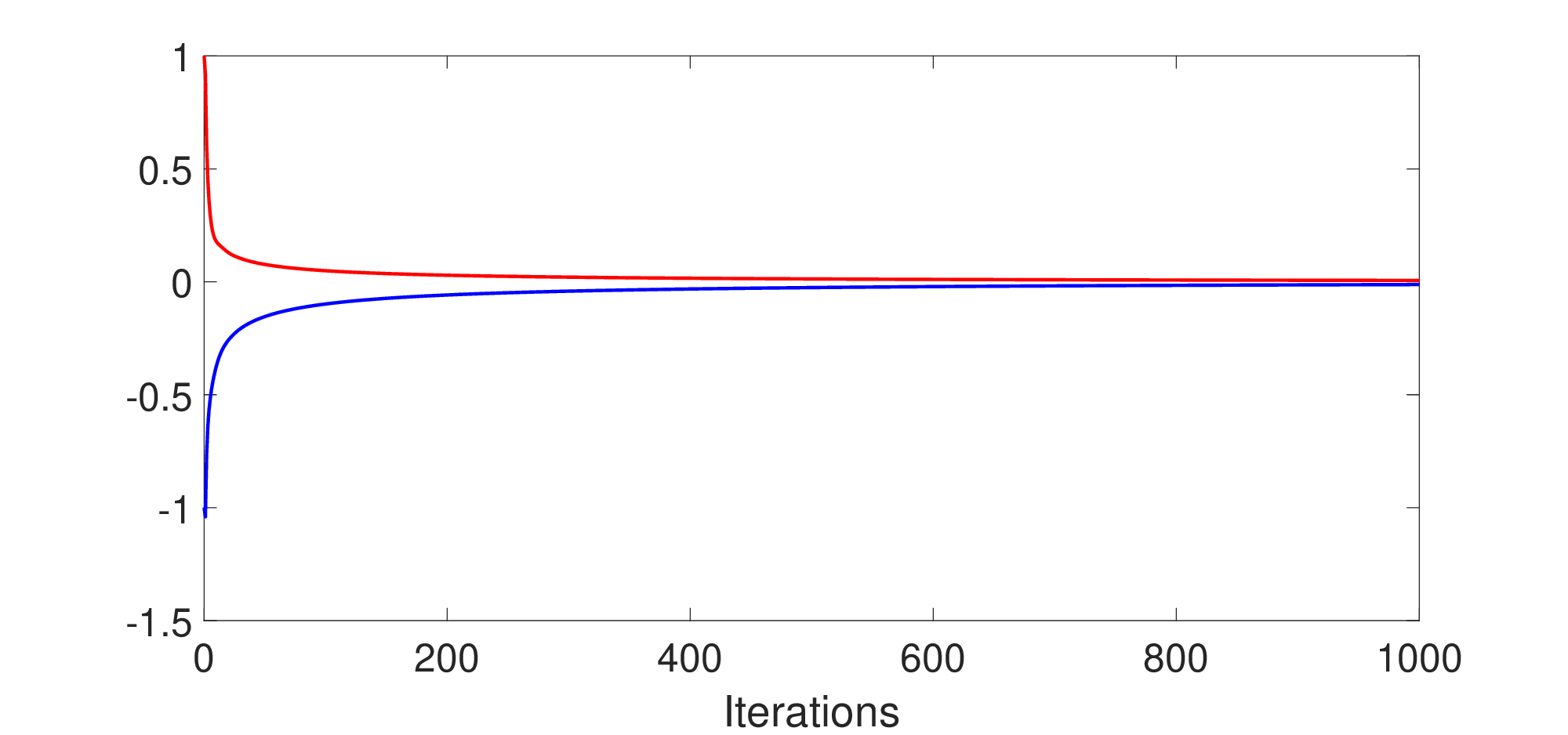}
  \caption{Convergence to the minimum norm solution}
  \label{fig2:sfig11}
\end{subfigure}
\begin{subfigure}{0.5\textwidth}
  \centering
  \includegraphics[width=.99\linewidth]{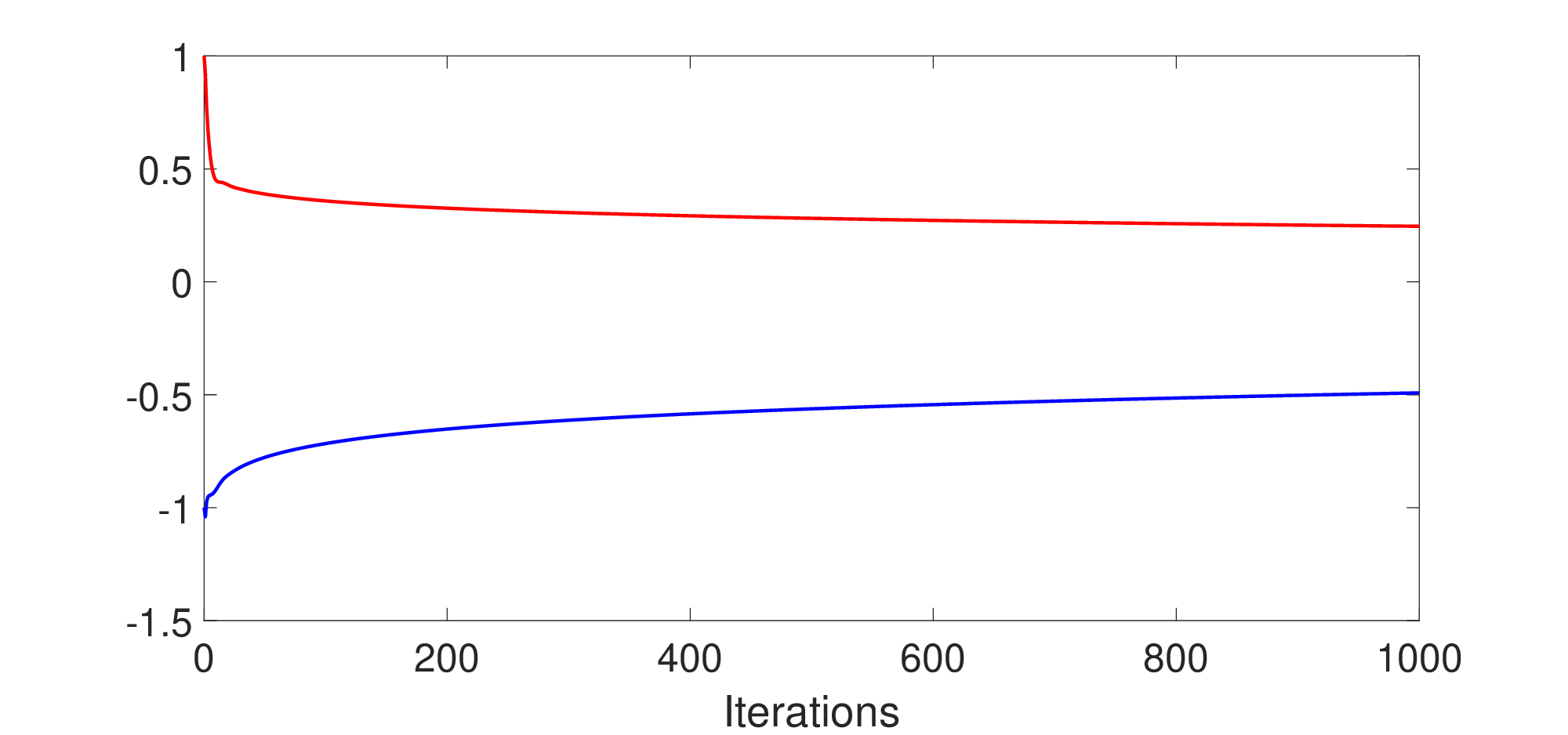}
  \caption{Dropping the first Tikhonov  term}
  \label{fig12sfig12}
\end{subfigure}

\begin{subfigure}{0.5\textwidth}
  \centering
  \includegraphics[width=.99\linewidth]{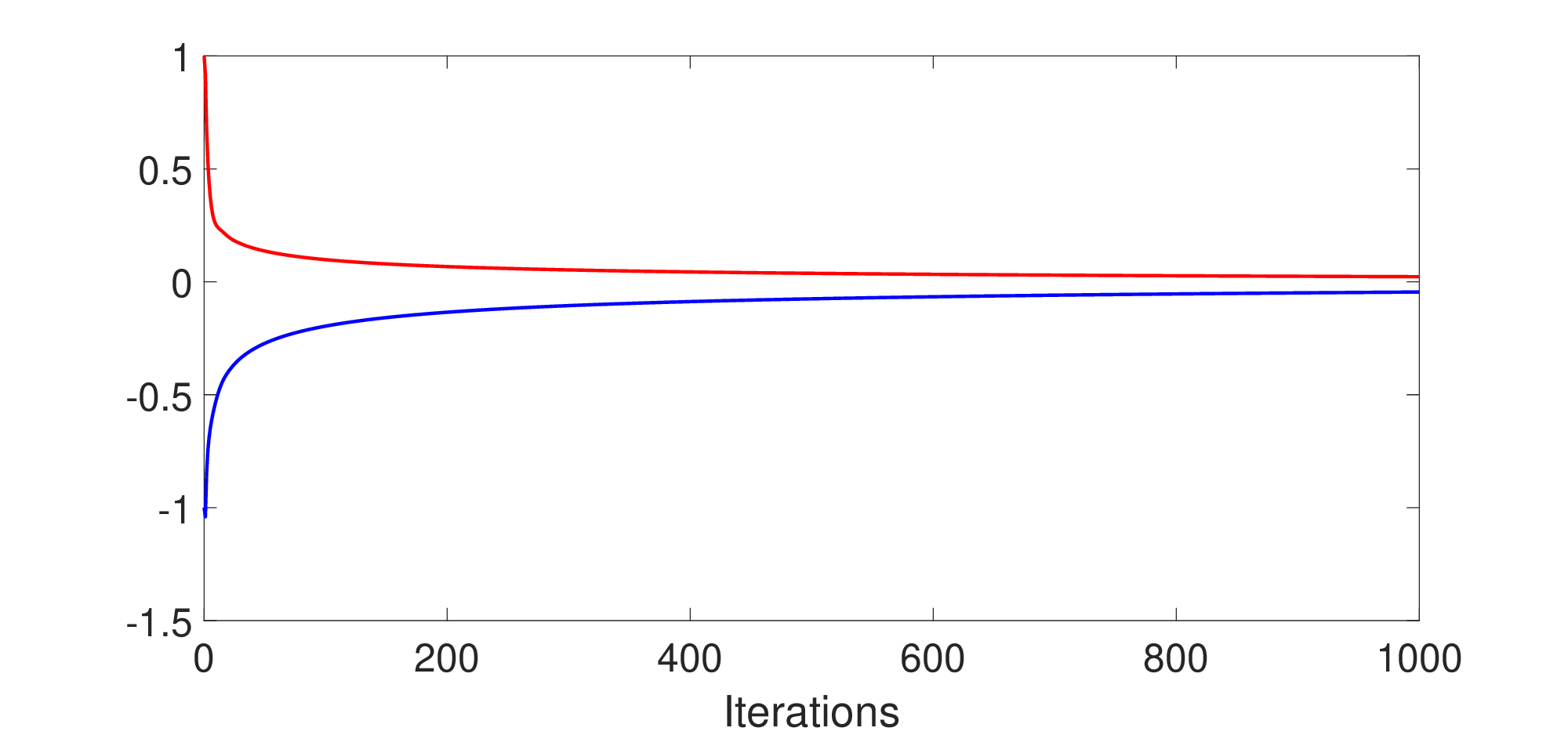}
  \caption{Dropping the second Tikhonov  term }
  \label{fig2:sfig13}
\end{subfigure}
\begin{subfigure}{0.5\textwidth}
  \centering
  \includegraphics[width=.99\linewidth]{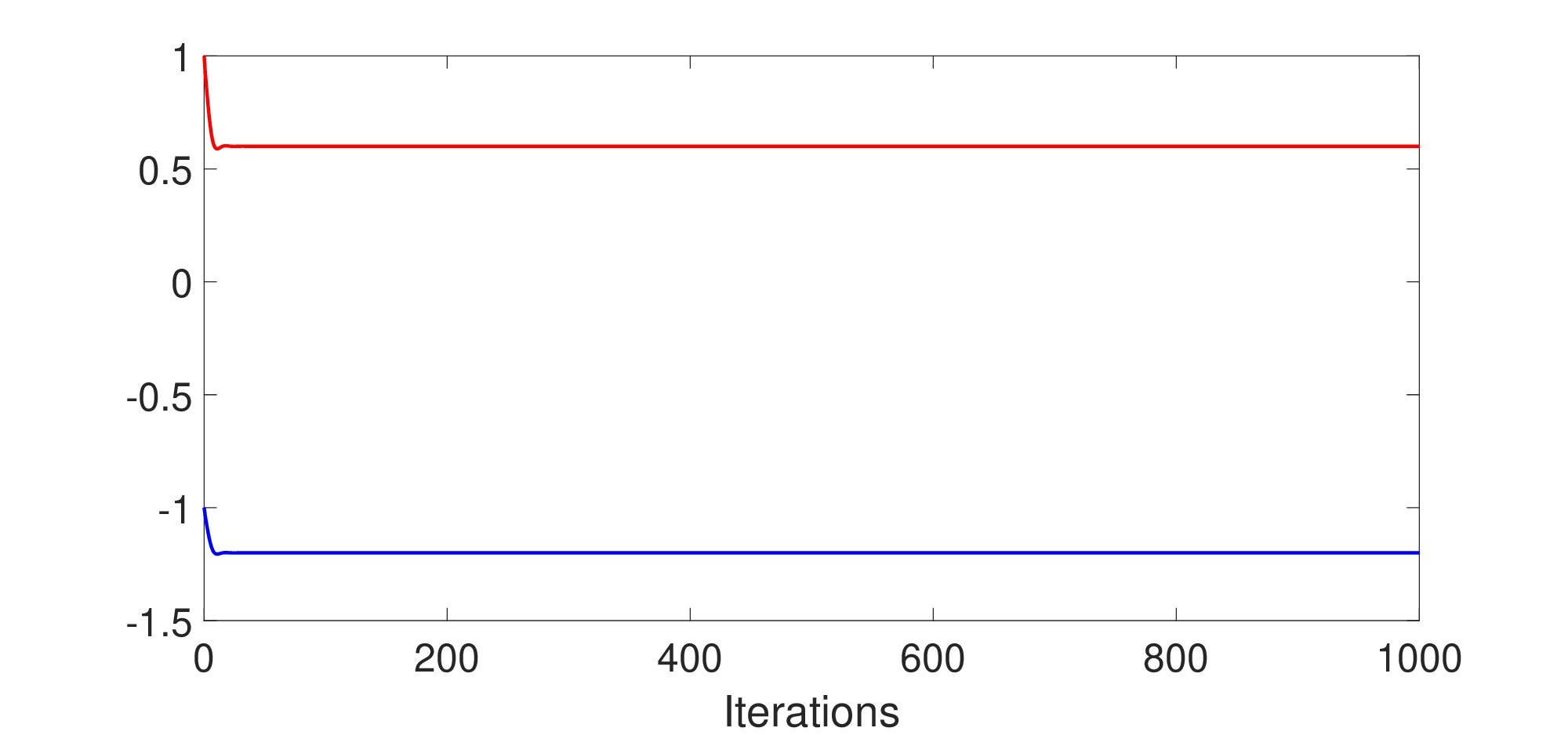}
  \caption{Dropping both Tikhonov  terms}
  \label{fig2:sfig14}
\end{subfigure}
 \caption{Renouncing to one of the Tikhonov regularization terms in \eqref{tdiscgen1} there is no convergence to the minimum norm solution anymore.}
\end{figure}

As we can see, in the absence of one of the Tikhonov regularization terms  we do not have convergence to the minimum norm minimizer of the objective function $f+g.$ Therefore,  according to the Figure 2,  the presence of both Tikhonov regularization terms in our algorithm \eqref{tdiscgen} is fully justified.

\section{Conclusions, perspectives}

To our best knowledge, Algorithm \eqref{tdiscgen} and in particular Algorithm \eqref{tdiscgen1} are the first inertial proximal-gradient type algorithms considered in the literature, that assure strong convergence to the minimum norm minimizer of the sum of a convex, possibly non-smooth and a smooth convex function, and at the same time provide fast convergence of the function values and  discrete velocity. As we have emphasized in the paper, these algorithms can be seen as FISTA type algorithms with two Tikhonov regularization terms. Further, we have shown that in order to obtain our strong convergence results we need both Tikhonov regularization terms in \eqref{tdiscgen}. Despite of the complex structure of the inertial parameter and the Tikhonov regularization parameters, our algorithms can easily be  implemented, therefore are suitable for use in practical problems arising in image processing and machine learning. Moreover, there are settings of the parameters when the optimal rate $\mathcal{O}(k^{-2})$ for the function values can be obtained, hence our algorithm can be a valuable substitute for FISTA.

A challenging related research is to consider the optimization problem having as objective the sum of a convex function and a smooth convex function, where the latter is composed with a linear operator and design an algorithm, (having in mind the ADMM algorithm with some Tikhonov regularization terms), in order to find the minimum norm solution.

\section{Declarations}

\begin{center}
    \textbf{Availability of data and materials}
\end{center}

In this manuscript only the datasets generated by authors were analysed.

\begin{center}
    \textbf{Competing interests}
\end{center}

The authors have no competing interests.

\section{Appendix}
\appendix

\section{The descent lemma and its consequences}
In order to obtain strong convergence   for the sequence $x_k$ generated by  Algorithm \eqref{tdiscgen} we need some preliminary results. The first one is the Descent Lemma \cite{Nest}.
\begin{lemma}\label{descent} Let $g:\mathcal{H}\To\R$ be a  smooth function, with $L_g-$Lipschitz continuous gradient. Then,
$$g(x)\le g(y)+\<\n g(y),x-y\>+\frac{L_g}{2}\|y-x\|^2,\,\forall x,y\in\mathcal{H}.$$
\end{lemma}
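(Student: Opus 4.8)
The plan is to reduce this multidimensional statement to a one-dimensional calculus computation carried out along the line segment joining $y$ and $x$. First I would introduce the auxiliary function $\phi:[0,1]\To\R$ defined by $\phi(t)=g(y+t(x-y))$. Since $g$ is continuously Fr\'{e}chet differentiable, the chain rule guarantees that $\phi$ is continuously differentiable on $[0,1]$ with $\phi'(t)=\<\n g(y+t(x-y)),x-y\>$. The fundamental theorem of calculus then yields
$$g(x)-g(y)=\phi(1)-\phi(0)=\int_0^1\<\n g(y+t(x-y)),x-y\>\,dt.$$

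Next I would isolate the linear term by writing the discrepancy as
$$g(x)-g(y)-\<\n g(y),x-y\>=\int_0^1\<\n g(y+t(x-y))-\n g(y),x-y\>\,dt.$$
The integrand is controlled by the Cauchy--Schwarz inequality followed by the $L_g$-Lipschitz continuity of $\n g$: for each $t\in[0,1]$ one has
$$\<\n g(y+t(x-y))-\n g(y),x-y\>\le\|\n g(y+t(x-y))-\n g(y)\|\,\|x-y\|\le L_g\,t\,\|x-y\|^2.$$
Integrating this pointwise bound over $[0,1]$ and using $\int_0^1 t\,dt=\frac12$ produces exactly $g(x)-g(y)-\<\n g(y),x-y\>\le\frac{L_g}{2}\|y-x\|^2$, which is the claimed inequality.

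I do not anticipate a genuine obstacle here, as the estimate itself is entirely routine. The only point deserving a little care is the passage to the integral representation of $g(x)-g(y)$: it relies on the continuity of the map $t\mapsto\n g(y+t(x-y))$, and hence of $\phi'$, so that the fundamental theorem of calculus is legitimately applicable in the Hilbert-space setting; this continuity is guaranteed by the Lipschitz (in particular continuous) dependence of $\n g$ on its argument. Everything beyond that is a direct application of Cauchy--Schwarz and the Lipschitz hypothesis.
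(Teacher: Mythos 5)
Your proof is correct and is the standard argument for the Descent Lemma. The paper itself offers no proof of this statement — it is quoted from the literature (Nesterov's book, \cite{Nest}) — and your reduction to the one-dimensional function $\phi(t)=g(y+t(x-y))$, followed by the fundamental theorem of calculus, Cauchy--Schwarz, and the Lipschitz bound $\|\n g(y+t(x-y))-\n g(y)\|\le L_g t\|x-y\|$, is precisely the canonical derivation of that cited result; note also, correctly reflected in your argument, that convexity of $g$ is not needed here.
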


Further, we need the following property of smooth, convex functions,  see \cite{Nest}.

\begin{lemma}\label{gradLineq} Let $g:\mathcal{H}\To\R$ be a convex smooth function, with $L_g-$Lipschitz continuous gradient. Then,
$$\frac{1}{2L_g}\|\n g(y)-\n g(x)\|^2+\<\n g(y),x-y\>+g(y)\le g(x),\mbox{ for all }x,y\in\mathcal{H}.$$
\end{lemma}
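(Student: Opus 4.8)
The plan is to reduce the statement to the Descent Lemma (Lemma \ref{descent}) by a standard auxiliary-function argument. First I would fix $x,y\in\mathcal{H}$ and introduce the function $h:\mathcal{H}\To\R$, $h(z)=g(z)-\<\n g(y),z\>$. Since $g$ is convex and we only subtract a continuous linear functional, $h$ is convex; moreover $\n h(z)=\n g(z)-\n g(y)$, so $\n h$ is again $L_g$-Lipschitz continuous and, crucially, $\n h(y)=0$. For a convex differentiable function the vanishing of the gradient is a sufficient optimality condition, hence $y$ is a global minimizer of $h$, that is $h(y)\le h(z)$ for every $z\in\mathcal{H}$.

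The key step is to test this minimality inequality at the gradient-step point $z=x-\frac{1}{L_g}\n h(x)$. Applying the Descent Lemma to $h$ with base point $x$ and evaluation point $z$ gives
$$h\left(x-\frac{1}{L_g}\n h(x)\right)\le h(x)+\left\<\n h(x),-\frac{1}{L_g}\n h(x)\right\>+\frac{L_g}{2}\left\|\frac{1}{L_g}\n h(x)\right\|^2=h(x)-\frac{1}{2L_g}\|\n h(x)\|^2.$$
Combining this with $h(y)\le h\left(x-\frac{1}{L_g}\n h(x)\right)$ yields the compact inequality $h(y)\le h(x)-\frac{1}{2L_g}\|\n h(x)\|^2$.

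It then remains to substitute back the definition of $h$. Using $h(y)=g(y)-\<\n g(y),y\>$, $h(x)=g(x)-\<\n g(y),x\>$ and $\n h(x)=\n g(x)-\n g(y)$, the previous inequality becomes
$$g(y)-\<\n g(y),y\>\le g(x)-\<\n g(y),x\>-\frac{1}{2L_g}\|\n g(x)-\n g(y)\|^2,$$
and collecting the inner-product terms into $\<\n g(y),x-y\>$ (and using $\|\n g(x)-\n g(y)\|=\|\n g(y)-\n g(x)\|$) produces exactly the claimed estimate. The only genuinely delicate point is the twofold choice of the right centering, namely the auxiliary function built from $\n g(y)$ so that $y$ becomes the minimizer, together with the right test point, one gradient step away from $x$; once these are in place the argument is a mechanical combination of convexity and the Descent Lemma, with no further obstacle.
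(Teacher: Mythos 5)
Your proof is correct. The paper does not actually prove Lemma \ref{gradLineq}; it simply cites it from Nesterov's book \cite{Nest}, so there is no in-paper argument to compare against. Your derivation is the classical one: the auxiliary function $h(z)=g(z)-\<\n g(y),z\>$ is convex with $L_g$-Lipschitz gradient and has $y$ as a global minimizer since $\n h(y)=0$, and testing the minimality of $h(y)$ against the gradient-step value $h\left(x-\frac{1}{L_g}\n h(x)\right)\le h(x)-\frac{1}{2L_g}\|\n h(x)\|^2$ (an instance of Lemma \ref{descent}) gives exactly the stated inequality after substituting back the definition of $h$. Every step checks out, including the identification $\n h(x)=\n g(x)-\n g(y)$ and the final regrouping of the inner products.
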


The following modified descent lemma, which in particular contains Lemma 1 from \cite{ACFR}, has been proved in \cite{KL-amop}
\begin{lemma}\label{moddesc} Let $g:\mathcal{H}\To\R$ be a convex smooth function, with $L_g-$Lipschitz continuous gradient. Then,
\begin{equation}\label{f3}
g(y-s\n g(y))\le g(x)+\<\n g(y),y-x\>+\left(\frac{L_g}{2}s^2-s\right)\|\n g(y)\|^2-\frac{1}{2L_g}\|\n g(y)-\n g(x)\|^2,\,\forall x,y\in\mathcal{H}.
\end{equation}
Assume further that $s\in\left(0,\frac{1}{L_g}\right].$ Then,
\begin{equation}\label{f4}
g(y-s\n g(y))\le g(x)+\<\n g(y),y-x\>-\frac{s}{2}\|\n g(y)\|^2-\frac{s}{2}\|\n g(y)-\n g(x)\|^2,\,\forall x,y\in\mathcal{H}.
\end{equation}
\end{lemma}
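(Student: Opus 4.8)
The plan is to derive \eqref{f3} by chaining the two lemmas that immediately precede it, and then to read off \eqref{f4} from the stepsize restriction. First I would apply the Descent Lemma (Lemma \ref{descent}) with $y-s\n g(y)$ in the role of $x$ and with $y$ left unchanged. Since $(y-s\n g(y))-y=-s\n g(y)$, this gives
$$g(y-s\n g(y))\le g(y)-s\|\n g(y)\|^2+\frac{L_g s^2}{2}\|\n g(y)\|^2=g(y)+\left(\frac{L_g}{2}s^2-s\right)\|\n g(y)\|^2,$$
which already isolates the coefficient $\frac{L_g}{2}s^2-s$ that appears in \eqref{f3}.

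Next I would eliminate the stray term $g(y)$ on the right-hand side by invoking the convexity inequality of Lemma \ref{gradLineq}, which rearranges to
$$g(y)\le g(x)+\<\n g(y),y-x\>-\frac{1}{2L_g}\|\n g(y)-\n g(x)\|^2.$$
Substituting this bound for $g(y)$ into the estimate obtained in the first step produces exactly \eqref{f3}.

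Finally, for \eqref{f4} I would use the hypothesis $s\in\left(0,\frac{1}{L_g}\right]$ twice. Because $L_g s\le 1$, the coefficient satisfies $\frac{L_g}{2}s^2-s=s\left(\frac{L_g s}{2}-1\right)\le-\frac{s}{2}$; and because $\frac{1}{2L_g}\ge\frac{s}{2}$, the term $-\frac{1}{2L_g}\|\n g(y)-\n g(x)\|^2$ is bounded above by $-\frac{s}{2}\|\n g(y)-\n g(x)\|^2$. Weakening both quadratic terms in \eqref{f3} in this way yields \eqref{f4}. I do not expect a genuine obstacle here; the only points requiring attention are choosing the correct orientation of the points when applying Lemma \ref{descent} and respecting the sign convention in Lemma \ref{gradLineq}, after which the argument is pure bookkeeping.
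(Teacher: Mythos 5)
Your proof is correct: applying Lemma \ref{descent} at the point $y-s\n g(y)$, eliminating $g(y)$ via Lemma \ref{gradLineq}, and then weakening the two quadratic coefficients using $s\le \frac{1}{L_g}$ gives exactly \eqref{f3} and \eqref{f4}. The paper itself does not reprove this lemma (it cites \cite{KL-amop}), but your argument is the standard one and mirrors precisely the strategy the paper uses for the analogous Lemma \ref{modeBT}, so there is nothing to object to.
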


Now, based on Lemma \ref{descent}, Lemma \ref{gradLineq} and Lemma \ref{moddesc} we give the following result which can be seen as an extension of Lemma 2.3 from \cite{BT}.

\begin{lemma}\label{modeBT} Let $f:\mathcal{H}\To\overline{\R}$ be a lower semi-continuous convex function and let  $g:\mathcal{H}\To\R$ be a convex smooth function, with $L_g-$Lipschitz continuous gradient. Then, for every $s>0$ and every $x,y\in\mathcal{H}$ one has
\begin{align}\label{f5}
(f+g)(\prox\nolimits_{sf}(y-s\n g(y))) &\le (f+g)(x)-\frac{1}{s}\left\<y-\prox\nolimits_{sf}(y-s\n g(y)),x-y\right\>\\
\nonumber&+\left(\frac{L_g}{2}-\frac{1}{s}\right)\|\prox\nolimits_{sf}(y-s\n g(y))-y\|^2-\frac{1}{2L_g}\|\n g(y)-\n g(x)\|^2.
\end{align}
Assume further that $s\in\left(0,\frac{1}{L_g}\right].$ Then, for all $x,y\in\mathcal{H}$ one has
\begin{align}\label{f6}
(f+g)(\prox\nolimits_{sf}(y-s\n g(y))) &\le (f+g)(x)-\frac{1}{s}\left\<y-\prox\nolimits_{sf}(y-s\n g(y)),x-y\right\>\\
\nonumber&-\frac{1}{2s}\|\prox\nolimits_{sf}(y-s\n g(y))-y\|^2-\frac{s}{2}\|\n g(y)-\n g(x)\|^2.
\end{align}
\end{lemma}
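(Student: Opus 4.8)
The plan is to reproduce, in the present two-function setting, the classical Beck--Teboulle argument: combine the first-order optimality condition of the proximal step for $f$ with the descent and cocoercivity properties of $g$. Write $p=\prox\nolimits_{sf}(y-s\n g(y))$ for brevity. First I would exploit the definition of the proximal operator: since $p$ minimizes $z\mapsto f(z)+\frac{1}{2s}\|z-(y-s\n g(y))\|^2$, its optimality condition reads $\frac{1}{s}(y-p)-\n g(y)\in\p f(p)$. Inserting this subgradient into the subdifferential inequality for the convex function $f$ evaluated at an arbitrary $x$ gives
\begin{equation*}
f(p)\le f(x)-\frac{1}{s}\left\<y-p,x-p\right\>+\left\<\n g(y),x-p\right\>.
\end{equation*}

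Next I would treat the smooth part. The Descent Lemma (Lemma \ref{descent}) applied to the pair $(p,y)$ yields $g(p)\le g(y)+\left\<\n g(y),p-y\right\>+\frac{L_g}{2}\|p-y\|^2$, while the cocoercivity-type estimate (Lemma \ref{gradLineq}) applied to the pair $(x,y)$ yields $g(y)\le g(x)+\left\<\n g(y),y-x\right\>-\frac{1}{2L_g}\|\n g(y)-\n g(x)\|^2$. Chaining these and using $\left\<\n g(y),p-y\right\>+\left\<\n g(y),y-x\right\>=\left\<\n g(y),p-x\right\>$, the gradient inner-product term becomes $\left\<\n g(y),p-x\right\>=-\left\<\n g(y),x-p\right\>$, i.e. exactly the opposite of the one in the $f$-inequality. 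Adding the two inequalities therefore cancels all $\n g(y)$ inner products and leaves
\begin{equation*}
(f+g)(p)\le (f+g)(x)-\frac{1}{s}\left\<y-p,x-p\right\>+\frac{L_g}{2}\|p-y\|^2-\frac{1}{2L_g}\|\n g(y)-\n g(x)\|^2.
\end{equation*}

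To reach \eqref{f5} I would then perform the single genuinely algebraic step: split $x-p=(x-y)+(y-p)$, so that $\left\<y-p,x-p\right\>=\left\<y-p,x-y\right\>+\|y-p\|^2$. Substituting this converts $-\frac{1}{s}\left\<y-p,x-p\right\>$ into $-\frac{1}{s}\left\<y-p,x-y\right\>-\frac{1}{s}\|y-p\|^2$, and adding the latter to $\frac{L_g}{2}\|p-y\|^2$ produces the coefficient $\frac{L_g}{2}-\frac{1}{s}$ in front of $\|p-y\|^2$, which is precisely \eqref{f5}. Finally, \eqref{f6} follows from \eqref{f5} under the extra hypothesis $s\in\left(0,\frac{1}{L_g}\right]$: then $L_g\le\frac{1}{s}$ gives $\frac{L_g}{2}-\frac{1}{s}\le-\frac{1}{2s}$, and $\frac{1}{2L_g}\ge\frac{s}{2}$; since these coefficients multiply the non-negative quantities $\|p-y\|^2$ and $\|\n g(y)-\n g(x)\|^2$, the corresponding terms in \eqref{f5} are bounded above by $-\frac{1}{2s}\|p-y\|^2$ and $-\frac{s}{2}\|\n g(y)-\n g(x)\|^2$, which yields \eqref{f6}.

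I expect no serious difficulty, as the statement is an incremental sharpening of Lemma 2.3 of \cite{BT}. The only points requiring care are the sign bookkeeping that guarantees the $\n g(y)$ inner-product terms cancel exactly when the $f$- and $g$-inequalities are summed, and the verification that both coefficient replacements needed for \eqref{f6} go in the direction that preserves the inequality.
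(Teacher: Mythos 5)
Your proposal is correct and follows essentially the same route as the paper: the proximal optimality condition $\frac{1}{s}(y-p)-\n g(y)\in\p f(p)$ fed into the subgradient inequality for $f$, the Descent Lemma for the pair $(p,y)$ chained with Lemma \ref{gradLineq} for $(x,y)$, the exact cancellation of the $\n g(y)$ inner products upon summation, the split $x-p=(x-y)+(y-p)$ to produce the coefficient $\frac{L_g}{2}-\frac{1}{s}$, and the two coefficient comparisons for \eqref{f6}. No gaps.
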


\begin{proof}
Indeed, by taking $x=\prox\nolimits_{sf}(y-s\n g(y))$ in Lemma \ref{descent}, we get
\begin{equation}\label{f1}
g(\prox\nolimits_{sf}(y-s\n g(y)))\le g(y)+\<\n g(y),\prox\nolimits_{sf}(y-s\n g(y))-y\>+\frac{L_g}{2}\|\prox\nolimits_{sf}(y-s\n g(y))-y\|^2,\,\forall y\in\mathcal{H}.
\end{equation}
From Lemma \ref{gradLineq}  we have
\begin{equation}\label{f2}
g(y)\le g(x)+\<\n g(y),y-x\>-\frac{1}{2L_g}\|\n g(y)-\n g(x)\|^2,\,\forall x,y\in\mathcal{H}.
\end{equation}

Combining \eqref{f1} and \eqref{f2} we get
\begin{align}\label{if}
g(\prox\nolimits_{sf}(y-s\n g(y)))&\le g(x)+\<\n g(y),\prox\nolimits_{sf}(y-s\n g(y))-x\>+\frac{L_g}{2}\|\prox\nolimits_{sf}(y-s\n g(y))-y\|^2\\
\nonumber&-\frac{1}{2L_g}\|\n g(y)-\n g(x)\|^2,\,\forall x,y\in\mathcal{H}.
\end{align}

Now, by using the fact that $\prox\nolimits_{sf}(z)=(I+s\p f)^{-1}(z),$ we get that $\frac{1}{s}(z-\prox\nolimits_{sf}(z))\in\p f(\prox\nolimits_{sf}(z))$ for all $z\in\mathcal{H}.$ Consequently, the sub-gradient inequality yields
$$f(x)\ge f(\prox\nolimits_{sf}(z))+\left\<\frac{1}{s}(z-\prox\nolimits_{sf}(z)),x-\prox\nolimits_{sf}(z)\right\>,\forall x,z\in\mathcal{H}.$$

Now by taking $z=y-s\n g(y)$ in the previous inequality, we get for all $x,y\in\mathcal{H}$ that
\begin{align}\label{if1}
f(\prox\nolimits_{sf}(y-s\n g(y))) \le f(x)-\frac{1}{s}\left\<y-s\n g(y)-\prox\nolimits_{sf}(y-s\n g(y)),x-\prox\nolimits_{sf}(y-s\n g(y))\right\>.
\end{align}
Now adding \eqref{if} and \eqref{if1} we get for all $x,y\in\mathcal{H}$ that
\begin{align}\label{if2}
(f+g)(\prox\nolimits_{sf}(y-s\n g(y))) &\le (f+g)(x)-\frac{1}{s}\left\<y-\prox\nolimits_{sf}(y-s\n g(y)),x-\prox\nolimits_{sf}(y-s\n g(y))\right\>\\
\nonumber&+\frac{L_g}{2}\|\prox\nolimits_{sf}(y-s\n g(y))-y\|^2-\frac{1}{2L_g}\|\n g(y)-\n g(x)\|^2\\
\nonumber&=(f+g)(x)-\frac{1}{s}\left\<y-\prox\nolimits_{sf}(y-s\n g(y)),x-y\right\>\\
\nonumber&+\left(\frac{L_g}{2}-\frac{1}{s}\right)\|\prox\nolimits_{sf}(y-s\n g(y))-y\|^2-\frac{1}{2L_g}\|\n g(y)-\n g(x)\|^2,
\end{align}
which is nothing else than \eqref{f5}.

Assume that $0<s\le\frac{1}{L_g}.$ Then,
$$\frac{L_g}{2}-\frac{1}{s}\le-\frac{1}{2s}
\mbox{ and }-\frac{1}{2L_g}\le-\frac{s}{2},$$
hence \eqref{f5} leads to \eqref{f6}.
\end{proof}

\section{Three pillars that sustain our results}\label{pil}

In this section we present three lemmas that are essential in order to prove our main result concerning the strong convergence of the sequences generated by \eqref{tdiscgen} to the minimum norm minimizer of our objective function $f+g.$

For $k\ge 1$ let us denote $g_k(\cdot)=g(\cdot)+\frac{\e_k}{2}\|\cdot\|^2$ and $F_k(\cdot)=f(\cdot)+g_k(\cdot)$. Then, both $g_k$ and $F_k$ are $\e_k-$strongly convex functions.
Moreover, since $g$ is smooth, $g_k$ is also smooth and $\n g_k(x)= \n g(x)+\e_k x$ for all $x\in\mathcal{H}$, further $\n g_k$ is also Lipschitz continuous having its Lipschitz constant $L+\e_k.$

In what follows we denote the unique minimizer of the strongly convex function $F_k$ by $\ol x_k.$

In order to obtain symmetry that allows us to apply telescopic sums the next result shows to be  crucial.

\begin{lemma}\label{forleftside}
Consider $x,y,z\in\mathcal{H}$ and let $(p_k)_{k\ge 1},\,(q_k)_{k\ge 1}$ be positive sequences. For every $k\ge 1$ consider  $z_k^*\in\p F_k(z)$. Then, for all $k\ge 1$ one has
\begin{align*}
(p_{k}+q_{k})F_k(x)-p_{k} F_k(y)-q_{k}F_k(z)\ge &(p_{k}+q_{k})(F_{k+1}(x)-F_{k+1}(\ol x_{k+1}))\\
&-(p_{k-1}+q_{k-1})(F_k(y)-F_k(\ol x_{k}))\\
\nonumber&+(p_{k-1}+q_{k-1}-p_{k})(F_k(y)-F_k(\ol x_{k}))\\
&+(p_{k}+q_{k})\frac{\e_{k+1}-\e_k}{2}\|\ol x_{k+1}\|^2+q_{k} \<z_k^*,\ol x_{k+1}-z\>\\
&+p_k\frac{\e_{k}}{2}\|\ox_{k+1}-\ox_k\|^2+q_k\frac{\e_k}{2}\|\ox_{k+1}-z\|^2.
\end{align*}
\end{lemma}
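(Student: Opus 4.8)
The plan is to form the difference ``left-hand side minus right-hand side'' and show it is nonnegative by splitting it into three manifestly nonnegative pieces. The first observation is purely bookkeeping: the two terms on the right that involve $F_k(y)-F_k(\ol x_k)$ collapse, since
$$-(p_{k-1}+q_{k-1})(F_k(y)-F_k(\ol x_k))+(p_{k-1}+q_{k-1}-p_k)(F_k(y)-F_k(\ol x_k))=-p_k(F_k(y)-F_k(\ol x_k)),$$
so that the $F_k(y)$ contributions cancel against the $-p_kF_k(y)$ on the left, leaving only $p_kF_k(\ol x_k)$ to account for. Hence, after moving $p_kF_k(y)$ to both sides, I would reduce the claim to
$$(p_k+q_k)F_k(x)-q_kF_k(z)\ge(p_k+q_k)(F_{k+1}(x)-F_{k+1}(\ol x_{k+1}))+p_kF_k(\ol x_k)+(\text{remaining quadratic and subgradient terms}).$$

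The key algebraic identity I would then exploit is $F_k(w)=F_{k+1}(w)+\frac{\e_k-\e_{k+1}}{2}\|w\|^2$, valid for every $w$ directly from the definition $F_j=f+g+\frac{\e_j}{2}\|\cdot\|^2$. Applying it at $w=x$ rewrites $(p_k+q_k)F_k(x)$ as $(p_k+q_k)F_{k+1}(x)+(p_k+q_k)\frac{\e_k-\e_{k+1}}{2}\|x\|^2$, and applying it at $w=\ol x_{k+1}$ turns the combination $F_{k+1}(\ol x_{k+1})-\frac{\e_{k+1}-\e_k}{2}\|\ol x_{k+1}\|^2$ back into $F_k(\ol x_{k+1})$. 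After these substitutions, and after splitting $(p_k+q_k)F_k(\ol x_{k+1})=p_kF_k(\ol x_{k+1})+q_kF_k(\ol x_{k+1})$, the difference LHS$-$RHS collapses exactly to
$$(p_k+q_k)\frac{\e_k-\e_{k+1}}{2}\|x\|^2+p_k\left(F_k(\ol x_{k+1})-F_k(\ol x_k)-\frac{\e_k}{2}\|\ol x_{k+1}-\ol x_k\|^2\right)+q_k\left(F_k(\ol x_{k+1})-F_k(z)-\<z_k^*,\ol x_{k+1}-z\>-\frac{\e_k}{2}\|\ol x_{k+1}-z\|^2\right).$$

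Finally I would argue that each of the three summands is nonnegative. The first is nonnegative because $(\e_k)$ is non-increasing and $p_k,q_k>0$. For the second, I use that $\ol x_k$ minimizes $F_k$, so $0\in\p F_k(\ol x_k)$, and the $\e_k$-strong convexity of $F_k$ yields $F_k(\ol x_{k+1})\ge F_k(\ol x_k)+\frac{\e_k}{2}\|\ol x_{k+1}-\ol x_k\|^2$. For the third, I use $z_k^*\in\p F_k(z)$ together with $\e_k$-strong convexity to get $F_k(\ol x_{k+1})\ge F_k(z)+\<z_k^*,\ol x_{k+1}-z\>+\frac{\e_k}{2}\|\ol x_{k+1}-z\|^2$. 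Adding the three nonnegative quantities gives LHS$\ge$RHS, as desired. The only delicate point is the reorganization in the second paragraph: one must track precisely which quadratic corrections $\frac{\e_k-\e_{k+1}}{2}\|\cdot\|^2$ convert the $F_{k+1}$-values back into $F_k$-values, because it is exactly this conversion that aligns the leftover terms with the clean strong-convexity inequalities; once the grouping is arranged, every remaining step is a direct invocation of strong convexity and the monotonicity of $(\e_k)$.
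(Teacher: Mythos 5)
Your proof is correct and uses exactly the same ingredients as the paper's: the identity $F_k(w)=F_{k+1}(w)+\frac{\e_k-\e_{k+1}}{2}\|w\|^2$, the inequality $F_k(\ol x_{k+1})-F_k(\ol x_k)\ge\frac{\e_k}{2}\|\ol x_{k+1}-\ol x_k\|^2$ coming from $0\in\p F_k(\ol x_k)$, and the $\e_k$-strong-convexity subgradient inequality at $z$. The only difference is presentational: you write LHS$-$RHS as a sum of three manifestly nonnegative pieces, whereas the paper reaches the same conclusion through a chain of successive lower bounds.
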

\begin{proof}
First of all note that
\begin{align*}
&(p_k+q_k)F_k(x)-p_k F_k(y)-q_kF_k(z)=(p_k+q_k)(F_k(x)-F_{k+1}(\ol x_{k+1})) +(p_k+q_k)F_{k+1}(\ol x_{k+1})\\
\nonumber&-p_k(F_k(y)-F_k(\ol x_{k}))-p_k F_k(\ol x_{k})-q_kF_k(z)
\end{align*}

Note that by using the fact that $(\e_k)$ is non-increasing we have
$$F_k(x)=F_{k+1}(x)+\frac{\e_k-\e_{k+1}}{2}\|x\|^2\ge F_{k+1}(x),$$ consequently it holds
\begin{align}\label{righttra}
&(p_k+q_k)F_k(x)-p_k F_k(y)-q_kF_k(z)=(p_k+q_k)(F_k(x)-F_{k+1}(\ol x_{k+1})) +(p_k+q_k)F_{k+1}(\ol x_{k+1})\\
\nonumber&-p_k(F_k(y)-F_k(\ol x_{k}))-p_k F_k(\ol x_{k})-q_kF_k(z)\ge (p_k+q_k)(F_{k+1}(x)-F_{k+1}(\ol x_{k+1}))\\
\nonumber& -(p_{k-1}+q_{k-1})(F_k(y)-F_k(\ol x_{k}))+(p_{k-1}+q_{k-1}-p_k)(F_k(y)-F_k(\ol x_{k}))\\
\nonumber&+p_k(F_{k+1}(\ol x_{k+1})-F_{k}(\ol x_{k}))+q_k(F_{k+1}(\ol x_{k+1})-F_k(z)).
\end{align}
Let us further estimate the terms $p_k(F_{k+1}(\ol x_{k+1})-F_{k}(\ol x_{k}))$ and $q_k(F_{k+1}(\ol x_{k+1})-F_k(z))$ in \eqref{righttra}.

On one hand, according to \eqref{fontos3} one has $F_{k}(\ox_{k+1})-F_{k}(\ox_{k})\ge\frac{\e_{k}}{2}\|\ox_{k+1}-\ox_k\|^2$ hence
\begin{align}\label{righttrala}
p_k(F_{k+1}(\ol x_{k+1})-F_{k}(\ol x_{k}))&=p_k\left(F_{k}(\ol x_{k+1})-F_{k}(\ol x_{k})+\frac{\e_{k+1}-\e_k}{2}\|\ol x_{k+1}\|^2\right)\\
\nonumber&\ge p_k\frac{\e_{k}}{2}\|\ox_{k+1}-\ox_k\|^2+p_k\frac{\e_{k+1}-\e_k}{2}\|\ol x_{k+1}\|^2\ge p_k\frac{\e_{k+1}-\e_k}{2}\|\ol x_{k+1}\|^2.
\end{align}
On the other hand, by using the sub-gradient inequality  we get for every $z_k^*\in\p F_k(z)$ that
\begin{align}\label{righttralala}
&q_k(F_{k+1}(\ol x_{k+1})-F_k(z))=q_k\left(F_{k}(\ol x_{k+1})-F_k(z)+\frac{\e_{k+1}-\e_k}{2}\|\ol x_{k+1}\|^2\right)\ge\\
\nonumber&q_k \<z_k^*,\ol x_{k+1}-z\>+q_k\frac{\e_k}{2}\|\ox_{k+1}-z\|^2+q_k\frac{\e_{k+1}-\e_k}{2}\|\ol x_{k+1}\|^2\ge\\
 \nonumber&q_k\<z_k^*,\ol x_{k+1}-z\>+q_k\frac{\e_{k+1}-\e_k}{2}\|\ol x_{k+1}\|^2.
\end{align}

Consequently, \eqref{righttra}, \eqref{righttrala} and \eqref{righttralala} lead to
\begin{align}\label{righttralalala}
&(p_k+q_k)F_k(x)-p_k F_k(y)-q_kF_k(z)\ge\\
\nonumber& (p_k+q_k)(F_{k+1}(x)-F_{k+1}(\ol x_{k+1}))-(p_{k-1}+q_{k-1})(F_k(y)-F_k(\ol x_{k}))\\
\nonumber&+(p_{k-1}+q_{k-1}-p_k)(F_k(y)-F_k(\ol x_{k}))+(p_k+q_k)\frac{\e_{k+1}-\e_k}{2}\|\ol x_{k+1}\|^2+q_k \<z_k^*,\ol x_{k+1}-z\>\\
\nonumber&+p_k\frac{\e_{k}}{2}\|\ox_{k+1}-\ox_k\|^2+q_k\frac{\e_k}{2}\|\ox_{k+1}-z\|^2.
\end{align}
\end{proof}

\begin{remark}\label{forleftsidenoeps}
Note that in case we deal only with the convex function $F=f+g$, i.e., we take the parameter $\e_k\equiv 0$, then some similar result to those obtained in  Lemma \ref{forleftside} holds.
Indeed, in that case one can simply write for every $\ol x\in\argmin F$ that
\begin{align*}
(p_{k}+q_{k})F(x)-p_{k} F(y)-q_{k}F(z)= &(p_{k}+q_{k})(F(x)- F(\ol x))-(p_{k-1}+q_{k-1})(F(y)-F(\ol x))\\
\nonumber&+(p_{k-1}+q_{k-1}-p_{k})(F(y)- F(\ol x))+q_k( F(\ol x)-F(z))
\end{align*}
and by the sub-gradient inequality one has
$q_k( F(\ol x)-F(z))\ge q_{k} \<z^*,\ol x-z\>\mbox{ for all }z^*\in\p F(z).$
Hence,
\begin{align*}
(p_{k}+q_{k})F(x)-p_{k} F(y)-q_{k}F(z)\ge &(p_{k}+q_{k})(F(x)- F(\ol x))-(p_{k-1}+q_{k-1})(F(y)-F(\ol x))\\
\nonumber&+(p_{k-1}+q_{k-1}-p_{k})(F(y)- F(\ol x))+q_{k} \<z^*,\ol x-z\>.
\end{align*}
Nevertheless, in case there are no Tikhonov regularization terms, as in case of FISTA, it is  enough to consider the following identity:
\begin{align}\label{leftsidenoeps}
(p_{k}+q_{k})F(x)-p_{k} F(y)-q_{k}\min F= &(p_{k}+q_{k})(F(x)- \min F)-(p_{k-1}+q_{k-1})(F(y)-\min F)\\
\nonumber&+(p_{k-1}+q_{k-1}-p_{k})(F(y)- \min F).
\end{align}
\end{remark}

The next lema deals with some affine combinations of the sequences generates by \eqref{tdiscgen} and will be used in the proof of our main result.

\begin{lemma}\label{foretan1} Consider the sequence $(t_k)_{k\ge 0}$ that satisfies \eqref{condT} and let $(x_k)_{k\ge 1}$ and $(y_k)_{k\ge 1}$ be the sequences generated by Algorithm \eqref{tdiscgen}.
For $k\ge 1$, consider the sequence  $\eta_k=\frac{t_{k-1}^2}{t_k-1}y_k-\frac{t_{k-1}^2}{t_k}x_k.$

Then, $(\eta_k)_{k\ge 1}$ satisfies the following recursion:
\begin{equation}\label{reletan}
\eta_{k+1}=\frac{t_k^2-t_k}{t_{k-1}^2}\eta_k+t_k(x_{k+1}-y_k).
\end{equation}
Moreover, $\eta_{k+1}$ is an affine combination of $x_{k+1}$ and $x_k$, that is
\begin{equation}\label{convcombetan}
\eta_{k+1}=t_kx_{k+1}+(1-t_k)x_k.
\end{equation}
\end{lemma}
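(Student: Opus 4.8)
The plan is to establish the affine-combination identity \eqref{convcombetan} first by a direct substitution, and then to read off the recursion \eqref{reletan} as an immediate consequence. The reason for choosing this order is that the definition of $\eta_{k+1}$ involves $y_{k+1}$, whereas both target identities are free of $y_{k+1}$; substituting the algorithm's formula for $y_{k+1}$ into $\eta_{k+1}$ is therefore the natural way to eliminate it.

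First I would write out, from the definition of $\eta_k$ shifted by one index,
\[
\eta_{k+1}=\frac{t_k^2}{t_{k+1}-1}y_{k+1}-\frac{t_k^2}{t_{k+1}}x_{k+1},
\]
and replace $y_{k+1}$ by its expression from the first line of \eqref{tdiscgen}, namely
\[
y_{k+1}=x_{k+1}+\frac{(t_{k+1}-1)(t_k-1)}{t_k^2}(x_{k+1}-x_k)-\frac{-t_{k+1}^2+t_{k+1}+t_k^2}{t_k^2t_{k+1}}x_{k+1}.
\]
Multiplying by $\frac{t_k^2}{t_{k+1}-1}$, the inertial term simplifies cleanly: the factor $t_k^2$ in its denominator cancels the $t_k^2$ in front, and the factor $(t_{k+1}-1)$ cancels $\frac{1}{t_{k+1}-1}$, leaving exactly $(t_k-1)(x_{k+1}-x_k)$. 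The coefficient of $x_k$ is then $-(t_k-1)=1-t_k$, as desired, since no other term contributes to $x_k$. The coefficient of $x_{k+1}$ consists of the remaining contributions,
\[
\frac{t_k^2}{t_{k+1}-1}+(t_k-1)-\frac{-t_{k+1}^2+t_{k+1}+t_k^2}{(t_{k+1}-1)t_{k+1}}-\frac{t_k^2}{t_{k+1}};
\]
I would place these over the common denominator $(t_{k+1}-1)t_{k+1}$ and verify that the numerator collapses to $t_kt_{k+1}(t_{k+1}-1)$, so that the coefficient equals $t_k$. This yields $\eta_{k+1}=t_kx_{k+1}+(1-t_k)x_k$, which is \eqref{convcombetan}.

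With \eqref{convcombetan} in hand, the recursion \eqref{reletan} requires only the definition of $\eta_k$. Using $\eta_k=\frac{t_{k-1}^2}{t_k-1}y_k-\frac{t_{k-1}^2}{t_k}x_k$, a short computation gives
\[
\frac{t_k^2-t_k}{t_{k-1}^2}\eta_k=\frac{t_k(t_k-1)}{t_{k-1}^2}\left(\frac{t_{k-1}^2}{t_k-1}y_k-\frac{t_{k-1}^2}{t_k}x_k\right)=t_ky_k-(t_k-1)x_k.
\]
Adding $t_k(x_{k+1}-y_k)$ cancels the $t_ky_k$ term and leaves $t_kx_{k+1}-(t_k-1)x_k$, which is precisely the right-hand side of \eqref{convcombetan}. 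Hence $\frac{t_k^2-t_k}{t_{k-1}^2}\eta_k+t_k(x_{k+1}-y_k)=\eta_{k+1}$, proving \eqref{reletan}.

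The only genuine obstacle is the bookkeeping in the coefficient of $x_{k+1}$: one must check that after clearing denominators the seemingly complicated numerator collapses to $t_kt_{k+1}(t_{k+1}-1)$. This is a routine but slightly lengthy cancellation, and it is where sign errors are most likely; everything else is immediate once \eqref{convcombetan} is available, and the interplay of the two Tikhonov-type denominators $t_k^2$ and $t_k^2t_{k+1}$ with the prefactor $\frac{t_k^2}{t_{k+1}-1}$ is exactly what makes the telescoping work.
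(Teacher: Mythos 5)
Your proof is correct and follows essentially the same route as the paper: both arguments substitute the algorithm's formula for $y_{k+1}$ into the definition of $\eta_{k+1}$ to obtain $\eta_{k+1}=t_kx_{k+1}+(1-t_k)x_k$, and both verify that $\frac{t_k^2-t_k}{t_{k-1}^2}\eta_k+t_k(x_{k+1}-y_k)$ equals the same expression. The coefficient collapse you flag does work out (the numerator is $(t_k-1)(t_{k+1}-1)t_{k+1}+t_{k+1}(t_{k+1}-1)=t_kt_{k+1}(t_{k+1}-1)$), so there is no gap.
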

\begin{proof}
We have to show that
$\eta_{k+1}=\frac{t_{k}^2}{t_{k+1}-1}y_{k+1}-\frac{t_{k}^2}{t_{k+1}}x_{k+1}=\frac{t_k^2-t_k}{t_{k-1}^2}\eta_k+t_k(x_{k+1}-y_k).$

On one hand, we have
 $$\frac{t_k^2-t_k}{t_{k-1}^2}\eta_k+t_k(x_{k+1}-y_k)=\frac{t_k^2-t_k}{t_{k-1}^2}\left( \frac{t_{k-1}^2}{t_k-1}y_k-\frac{t_{k-1}^2}{t_k}x_k \right)+t_k(x_{k+1}-y_k)=t_kx_{k+1}+(1-t_k)x_k.$$

On the other hand, according to \eqref{tdiscgen},  we have
\begin{align*}
y_{k+1}&=x_{k+1}+\frac{(t_{k+1}-1)(t_{k}-1)}{t_{k}^2}(x_{k+1}-x_{k})-\frac{-t_{k+1}^2+t_{k+1}+t_{k}^2}{t_{k}^2t_{k+1}} x_{k+1}\\
&=\frac{(t_{k+1}+t_k)(t_{k+1}-1)}{t_kt_{k+1}}x_{k+1}-\frac{(t_{k+1}-1)(t_k-1)}{t_k^2}x_k,
\end{align*}
hence,
\begin{align*}
\eta_{k+1}&=\frac{t_{k}^2}{t_{k+1}-1}y_{k+1}-\frac{t_{k}^2}{t_{k+1}}x_{k+1}\\
&=\frac{t_{k}^2}{t_{k+1}-1}\left(\frac{(t_{k+1}+t_k)(t_{k+1}-1)}{t_kt_{k+1}}x_{k+1}-\frac{(t_{k+1}-1)(t_k-1)}{t_k^2}x_k\right)-\frac{t_{k}^2}{t_{k+1}}x_{k+1}\\
&=t_kx_{k+1}+(1-t_k)x_k.
\end{align*}
\end{proof}

The next lemma will be very useful in the proof of our main result.

\begin{lemma}\label{foretandif} Consider the sequence $(t_k)_{k\ge 0}$ that satisfies \eqref{condT} and let $(x_k)_{k\ge 1}$ and $(y_k)_{k\ge 1}$ be the sequences generated by Algorithm \eqref{tdiscgen}.
For $k\ge 1$, consider the sequence  $\eta_k=\frac{t_{k-1}^2}{t_k-1}y_k-\frac{t_{k-1}^2}{t_k}x_k$ and let $x^*\in\mathcal{H}.$
Then, the following identity holds.
\begin{align}\label{leftreldif}
&
\left\<y_k-x_{k+1},2t_k^2y_k-2(t_k^2-t_k) x_k-2t_k x^*-t_k^2(y_k-x_{k+1})\right\>=\frac{t_k^2-t_k}{t_{k-1}^2}\|\eta_k-x^*\|^2-\|\eta_{k+1}-x^*\|^2\\
\nonumber&-\frac{t_k^2-t_k}{t_{k-1}^2}\cdot\frac{-t_k^2+t_k+t_{k-1}^2}{t_{k-1}^2}\|\eta_k\|^2+\frac{-t_k^2+t_k+t_{k-1}^2}{t_{k-1}^2}\|x^*\|^2,
\end{align}
for all $k\ge 1.$
\end{lemma}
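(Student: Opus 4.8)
Since \eqref{leftreldif} is an exact algebraic identity rather than an inequality, the plan is to expand both sides and match them term by term; the only genuine content comes from the two structural relations for $\eta_k$ furnished by Lemma~\ref{foretan1}. First I would abbreviate $\alpha_k=\frac{-t_k^2+t_k+t_{k-1}^2}{t_{k-1}^2}$ and record the elementary fact $\frac{t_k^2-t_k}{t_{k-1}^2}=1-\alpha_k$, so that the recursion \eqref{reletan} reads $\eta_{k+1}=(1-\alpha_k)\eta_k+t_k(x_{k+1}-y_k)$. A second short computation, straight from the definition $\eta_k=\frac{t_{k-1}^2}{t_k-1}y_k-\frac{t_{k-1}^2}{t_k}x_k$, yields the crucial identity $(1-\alpha_k)\eta_k=t_ky_k-(t_k-1)x_k$, obtained by multiplying $\eta_k$ by $\frac{t_k(t_k-1)}{t_{k-1}^2}$ and cancelling $t_{k-1}^2$. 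This is the relation that will ultimately let the inner product on the left be rewritten through $\eta_k$.

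Next I would process the right-hand side, which in the $\alpha_k$ notation is $(1-\alpha_k)\|\eta_k-x^*\|^2-\|\eta_{k+1}-x^*\|^2-(1-\alpha_k)\alpha_k\|\eta_k\|^2+\alpha_k\|x^*\|^2$. Writing $\eta_{k+1}-x^*=(1-\alpha_k)\eta_k-t_k(y_k-x_{k+1})-x^*$ from the recursion and expanding $\|\eta_{k+1}-x^*\|^2$, I substitute this into the above and collect coefficients. The $\|\eta_k\|^2$ terms carry the factor $(1-\alpha_k)\bigl(1-(1-\alpha_k)-\alpha_k\bigr)=0$, the $\langle\eta_k,x^*\rangle$ terms cancel since $-2(1-\alpha_k)+2(1-\alpha_k)=0$, and the $\|x^*\|^2$ terms cancel since $(1-\alpha_k)-1+\alpha_k=0$. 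What remains is exactly
\[
-t_k^2\|y_k-x_{k+1}\|^2+2(1-\alpha_k)t_k\langle\eta_k,y_k-x_{k+1}\rangle-2t_k\langle y_k-x_{k+1},x^*\rangle.
\]

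Finally I would expand the left-hand side as $2t_k^2\langle y_k-x_{k+1},y_k\rangle-2(t_k^2-t_k)\langle y_k-x_{k+1},x_k\rangle-2t_k\langle y_k-x_{k+1},x^*\rangle-t_k^2\|y_k-x_{k+1}\|^2$. The terms $-t_k^2\|y_k-x_{k+1}\|^2$ and $-2t_k\langle y_k-x_{k+1},x^*\rangle$ already match the surviving right-hand side, so it only remains to verify $2t_k^2\langle y_k-x_{k+1},y_k\rangle-2(t_k^2-t_k)\langle y_k-x_{k+1},x_k\rangle=2(1-\alpha_k)t_k\langle\eta_k,y_k-x_{k+1}\rangle$; dividing by $2t_k$ this is precisely $\langle y_k-x_{k+1},\,t_ky_k-(t_k-1)x_k\rangle=\langle y_k-x_{k+1},(1-\alpha_k)\eta_k\rangle$, which holds by the key identity from the first step. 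Since every equality is exact there is no real obstacle, only careful bookkeeping; the points that require attention are the sign conventions (note $x_{k+1}-y_k=-(y_k-x_{k+1})$) and the fact that all cancellations are forced by the single algebraic relation $1-\alpha_k=\frac{t_k^2-t_k}{t_{k-1}^2}$.
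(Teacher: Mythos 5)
Your proof is correct and follows essentially the same route as the paper: both introduce $\a_k=\frac{-t_k^2+t_k+t_{k-1}^2}{t_{k-1}^2}$, invoke the recursion $\eta_{k+1}=(1-\a_k)\eta_k+t_k(x_{k+1}-y_k)$ from Lemma \ref{foretan1}, and reduce everything to the key identity $2t_k(1-\a_k)\eta_k=2t_k^2y_k-2(t_k^2-t_k)x_k$. The only cosmetic difference is that the paper transforms the right-hand side into the left-hand side in one chain of equalities, whereas you expand both sides and match coefficients; the cancellations you record are exactly those the paper exploits.
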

\begin{proof}
For simplicity, let us denote $\a_k=\frac{-t_k^2+t_k+t_{k-1}^2}{t_{k-1}^2},\,k\ge 1.$ Since $(t_k)_{k\ge 0}$ that satisfies \eqref{condT}, we have  $t_k\in\left(1,\frac{1+\sqrt{1+4t_{k-1}^2}}{2}\right)$ for all $k\ge 1$, that is $-t_k^2+t_k+t_{k-1}^2>0$ for all $k\ge 1$, we deduce that
$$1>\a_k> 0.$$
Hence, the right hand side of \eqref{leftreldif} can be written as
$$(1-\a_k)\|\eta_k-x^*\|^2-\|\eta_{k+1}-x^*\|^2-(1-\a_k)\a_k\|\eta_k\|^2+\a_k\|x^*\|^2.$$

According to Lemma \ref{foretan1}, for all $k\ge 1$ one has
$\eta_{k+1}=(1-\a_k)\eta_k+t_k(x_{k+1}-y_k),$
hence
\begin{align}\label{fontos}
&(1-\a_k)\|\eta_k-x^*\|^2-\|\eta_{k+1}-x^*\|^2+\a_k\|x^*\|^2=(1-\a_k)\|\eta_k\|^2+2\<\eta_{k+1}-(1-\a_k)\eta_k,x^*\>-\|\eta_{k+1}\|^2\\
\nonumber&=(1-\a_k)\|\eta_k\|^2+2\<\eta_{k+1}-(1-\a_k)\eta_k,x^*\>-\|\eta_{k+1}\|^2\\
\nonumber&=(1-\a_k)\|\eta_k\|^2+\left\<y_k-x_{k+1},-2t_kx^*\right\>-\left\|(1-\a_k)\eta_k+t_k(x_{k+1}-y_k)\right\|^2\\
\nonumber&=(1-\a_k)\a_k\|\eta_k\|^2+\left\<y_k-x_{k+1},-2t_kx^*\right\>+\left\<y_k-x_{k+1},2t_k(1-\a_k)\eta_k\right\>-t_k^2\|y_k-x_{k+1}\|^2\\
\nonumber&=(1-\a_k)\a_k\|\eta_k\|^2+\left\<y_k-x_{k+1},-2t_kx^*+2t_k(1-\a_k)\eta_k-t_k^2(y_k-x_{k+1})\right\>
\end{align}
Note that $$2t_k(1-\a_k)\eta_k=2t_k\cdot \frac{t_k^2-t_k}{t_{k-1}^2}\left( \frac{t_{k-1}^2}{t_k-1}y_k-\frac{t_{k-1}^2}{t_k}x_k\right)=2t_k^2y_k-2(t_k^2-t_k)x_k,$$
hence \eqref{fontos} becomes
\begin{align}\label{fontos1}
(1-\a_k)\|\eta_k-x^*\|^2-\|\eta_{k+1}&-x^*\|^2+\a_k\|x^*\|^2=(1-\a_k)\a_k\|\eta_k\|^2\\
\nonumber&+\left\<y_k-x_{k+1},2t_k^2y_k-2(t_k^2-t_k) x_k-2t_k x^*-t_k^2(y_k-x_{k+1})\right\>
\end{align}
which is exactly our claim.
\end{proof}

\begin{remark}\label{reletannoeps}
Consider now that we take $\e_k\equiv 0$ in \eqref{tdiscgen}. Then $y_k$ remains unchanged and therefore all the conclusions of Lemma \ref{foretan1} and Lemma \ref{foretandif} remain valid.
\end{remark}

\section{On the Tikhonov regularization techniques}

We continue the present section by emphasizing the main idea behind the Tikhonov regularization, which will assure strong convergence results for the sequence generated our algorithm \eqref{tdiscgen1} to a minimizer of the objective function of minimal norm. By  $\ox_{k}$ we denote the unique solution of the strongly convex minimization problem
\begin{align*}
 \min_{x \in \mathcal{H}} \left((f+ g)(x) + \frac{\e_k}{2} \| x \|^2 \right).
\end{align*}
We know, (see for instance \cite{att-com1996}), that $\lim\limits_{k \to +\infty} \ox_{k}=x^\ast$, where $x^\ast = \argmin\limits_{x \in \argmin (f+g)} \| x \|$ is the minimal norm element from the set $\argmin (f+g).$ Obviously, $\{x^*\}=\pr_{\argmin(f+ g)} 0$ and we have the inequality $\| \ox_{k} \| \leq \| x^\ast \|$ (see \cite{BCL}).

Since $\ox_{k}$ is the unique minimum of the strongly convex function $F_k(x)=f(x)+g(x)+\frac{\e_k}{2}\|x\|^2,$ obviously one has
\begin{equation}\label{fontos0}
\p F_k(\ox_{k})=\p f(\ox_k)+\n g(\ox_{k})+\e_k\ox_{k}\ni0.
\end{equation}
Further, from Lemma A.1 c) from \cite{L-CNSN} we have
\begin{equation}\label{lfos}
\left\|\ox_{k+1}-\ox_k\right\|\le\min\left(\frac{\e_k-\e_{k+1}}{\e_{k+1}}\|\ox_{k}\|,\frac{\e_k-\e_{k+1}}{\e_k}\|\ox_{k+1}\|\right).
\end{equation}

Note that since $F_k$ is strongly convex, from the gradient inequality we have
\begin{equation}\label{fontos2}
F_k(y)-F_k(x)\ge\<u_k,y-x\>+\frac{\e_k}{2}\|x-y\|^2,\mbox{ for all }u_k\in\p F_k(x)\mbox{ and }x,y\in\mathcal{H}.
\end{equation}
In particular
\begin{equation}\label{fontos3}
F_k(x)-F_k(\ox_k)\ge\frac{\e_k}{2}\|x-\ox_k\|^2,\mbox{ for all }x\in\mathcal{H}.
\end{equation}

Not that the latter relation may assure the strong convergence to the minimum norm solution $x^*$ of the sequences generated by \eqref{tdiscgen}. Indeed, let $(x_k)$ be the sequence generated by \eqref{tdiscgen} and assume that
$$\lim_{k\to+\infty}\frac{F_k(x_k)-F_k(\ox_k)}{\e_k}=0.$$
Then, $\lim_{k\to+\infty}\|x_k-\ox_k\|=0$ which combined with the fact that $\lim\limits_{k \to +\infty} \ox_{k}=x^\ast$ leads to
$$\lim\limits_{k \to +\infty} x_{k}=x^\ast=0.$$

Moreover, observe that for all $x,y\in\mathcal{H}$, one has

\begin{equation}\label{fontos5}
(f+g)(x)-(f+g)(y)=(F_k(x)-F_k(\ox_k))+(F_k(\ox_k)-F_k(y))+\frac{\e_k}{2}(\|y\|^2-\|x\|^2)\le F_k(x)-F_k(\ox_k)+\frac{\e_k}{2}\|y\|^2.
\end{equation}

Consequently, if one already has a rate for $F_k(x_k)-F_k(\ox_k)$, where  $(x_k)$ is the sequence generated by \eqref{tdiscgen}, then \eqref{fontos5} provides a rate for the potential energy $(f+g)(x_k)-\min (f+g).$
Indeed, one has
$$(f+g)(x_k)-\min (f+g)\le F_k(x_k)-F_k(\ox_k)+\frac{\e_k}{2}\|x^*\|^2,$$
where  $x^*$  is the minimum norm minimizer of $f+g$.

\end{document}